\documentclass[suppldata]{interact}

\usepackage{epstopdf}
\usepackage[caption=false]{subfig}

\usepackage{color}

\usepackage[numbers,sort&compress]{natbib}
\bibpunct[, ]{[}{]}{,}{n}{,}{,}
\makeatletter
\def\NAT@def@citea{\def\@citea{\NAT@separator}}
\makeatother

\theoremstyle{plain}
\newtheorem{theorem}{Theorem}[section]
\newtheorem{lemma}[theorem]{Lemma}
\newtheorem{corollary}[theorem]{Corollary}

\theoremstyle{definition}
\newtheorem{definition}[theorem]{Definition}

\theoremstyle{remark}
\newtheorem{remark}{Remark}

\begin{document}


\title{The $C$-Numerical Range for Schatten-Class Operators}

\author{
\name{Gunther Dirr\textsuperscript{a} and Frederik vom Ende\textsuperscript{b}\thanks{CONTACT Gunther Dirr. Email: dirr@mathematik.uni-wuerzburg.de, Frederik vom Ende (corresponding author). Email: frederik.vom-ende@tum.de}}
\affil{\textsuperscript{a}Department of Mathematics, University of W{\"u}rzburg, 97074 W{\"u}rzburg, Germany\\
\textsuperscript{b}Department of Chemistry, TU Munich, 85747 Garching, Germany}
}

\maketitle

\begin{abstract}
We generalize the $C$-numerical range $W_C(T)$ from trace-class to Schatten-class operators, i.e. to
$C\in\mathcal B^p(\mathcal H)$ and $T\in\mathcal B^q(\mathcal H)$ with $1/p + 1/q = 1$, and show that
its closure is always star-shaped with respect to the origin.
For $q \in (1,\infty]$, this is equivalent to saying that the closure of the image of the unitary orbit of 
$T\in\mathcal B^q(\mathcal H)$ under any continous linear functional $L\in(\mathcal B^q(\mathcal H))'$ 
is star-shaped with  respect to the origin. For $q=1$, one has star-shapedness with respect to
$\operatorname{tr}(T)W_e(L)$, where $W_e(L)$ denotes the essential range of $L$.

Moreover, the closure of $W_C(T)$ is convex if $C$ or $T$ is normal with collinear eigenvalues.
If $C$ and $T$ are both normal, then the $C$-spectrum of $T$ is a subset of the $C$-numerical
range, which itself is a subset of the closure of the convex hull of the $C$-spectrum. This closure
coincides with the closure of the $C$-numerical range if, in addition, the eigenvalues of $C$ or $T$
are collinear.

\end{abstract}

\begin{keywords}
$C$-numerical range; $C$-spectrum; Schatten-class operators 
\end{keywords}
\begin{amscode}
47A12, 47B10, 15A60
\end{amscode}

\section{Introduction}
This is a follow-up paper of \cite{dirr_ve}. There, we studied the $C$-numerical range $W_C (T)$
of $T$ generalized to trace-class operators $C$ and bounded operators $T$ acting on some infinite-dimensional 
separable complex Hilbert space $\mathcal H$, i.e.
$$
W_C (T) = \lbrace \operatorname{tr}(CU^\dagger TU)\,|\,U\in\mathcal B(\mathcal H)\text{ unitary}\rbrace\,,
$$
where $\mathcal B(\mathcal H)$ denotes the set of all bounded linear operators on $\mathcal H$. In this setting,
however, symmetry in $C$ and $T$ compared to the matrix case is lost in the sense that by construction 
the mapping $(C,T) \mapsto W_C (T)$ is no longer defined on a symmetric domain. Probably, the most natural
symmetric domain where $\operatorname{tr}(CT)$ is still well-defined is the set $\mathcal B^2(\mathcal H)$ of all
Hilbert-Schmidt operators. Thus a natural question to ask is whether the known results about convexity,
star-shapedness and the $C$-spectrum carry over to Hilbert-Schmidt operators. 

While analyzing this problem, it rapidly becomes evident that one can easily go one step further by considering
operators $C$ and $T$ which belong to conjugate Schatten-classes, as the set $\mathcal B^p(\mathcal H)$ of all 
$p$-Schatten-class operators constitutes a two-sided ideal in the $C^*$-algebra $\mathcal B(\mathcal H)$ for all 
$p\in[1,\infty]$. Starting from the symmetry requirement, our line-of-thought will arrive in a quite natural way at the outlined Schatten-class setting of the $C$-numerical range of $T$.

The paper is organized as follows: After a preliminary section collecting notation and basic results on Schatten-class
operators, we present our main results in Section \ref{sec:results}. We show that the closure of $W_C (T)$
for conjugate Schatten-class operators $C$ and $T$ is always star-shaped with respect to the origin. We reformulate
this result in terms of the image of the unitary orbit of $T\in\mathcal B^q(\mathcal H)$ under any continuous
linear functional $L\in(\mathcal B^q(\mathcal H))'$. Moreover, we prove that the closure of $W_C(T)$ is convex 
if either $C$ or $T$ is normal with collinear eigenvalues. Finally, we introduce the $C$-spectrum of $T$
and derive some inclusion and convexitiy results, which are well known for matrices, under the assumption that
both Schatten-class operators $C$ and $T$ are normal.

\section{Notation and Preliminaries}\label{sec:prelim}
Unless stated otherwise, here and henceforth $\mathcal X$ and $\mathcal Y$ are arbitrary 
infinite-dimensional complex Hilbert spaces while $\mathcal H$ and $\mathcal G$ are reserved
for infinite-dimensional \textit{separable} complex Hilbert spaces (for short i.s.c. Hilbert spaces). 
Moreover, $\mathcal B(\mathcal X,\mathcal Y)$, $\mathcal K(\mathcal X,\mathcal Y)$ and 
$\mathcal B^p(\mathcal X,\mathcal Y)$ denote the set of all bounded, compact and $p$-th Schatten-class operators between $\mathcal X$ and $\mathcal Y$, respectively. 

Scalar products are conjugate linear in the first argument and linear in the second one. For an arbitrary subset $S \subset \mathbb{C} $, the notations $\overline{S}$ and $\operatorname{conv}(S)$
stand for its closure and convex hull, respectively. Finally, given $p,q\in[1,\infty]$, we say $p$ and $q$ are conjugate if $\frac1p+\frac1q=1$.

\subsection{Infinite-dimensional Hilbert Spaces and the Trace Class}
For a comprehensive introduction to infinite-dimensional Hilbert spaces and Schatten-class operators, we refer to, e.g., \cite{berberian1976} and \cite{MeiseVogt}. Here, we recall only some
basic results which will be use frequently throughout this paper. \medskip

Let $(e_i)_{i\in I}$ be any orthonormal basis of $\mathcal X$ and let $x \in \mathcal X$. Then one 
has \textit{Parseval's identity} 
\begin{equation*}
\sum_{i\in I}|\langle e_i,x\rangle|^2 = \Vert x\Vert^2
\end{equation*}
which reduces to \textit{Bessel's inequality} 
\begin{equation*}
\sum_{j\in J}|\langle f_j,x\rangle|^2  \leq  \Vert x\Vert^2
\end{equation*}
if $(f_j)_{j\in J}$ is any orthonormal system in $\mathcal X$ instead of an orthonormal basis.
\begin{lemma}[Schmidt decomposition]\label{thm_1}
For each $C \in \mathcal K(\mathcal X,\mathcal Y)$, there exists a decreasing null sequence 
$(s_n(C))_{n\in\mathbb N}$ in $[0,\infty)$ as well as orthonormal systems $(f_n)_{n\in\mathbb N}$ in $\mathcal X$
and $(g_n)_{n\in\mathbb N}$ in $\mathcal Y$ such that
\begin{align*}
C = \sum_{n=1}^\infty s_n(C)\langle f_n,\cdot\rangle g_n\,,
\end{align*}
where the series converges in the operator norm.
\end{lemma}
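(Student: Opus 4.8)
The plan is to derive the Schmidt decomposition from the spectral theorem for compact positive self-adjoint operators applied to $C^\dagger C$. Since $C\in\mathcal K(\mathcal X,\mathcal Y)$, the operator $C^\dagger C$ is compact, positive and self-adjoint on $\mathcal X$, so the spectral theorem furnishes an (at most countable) orthonormal system $(f_n)_n$ of eigenvectors of $C^\dagger C$ with eigenvalues $\mu_1\geq\mu_2\geq\cdots>0$ forming a null sequence (a finite list if $C$ has finite rank) such that $C^\dagger C=\sum_n\mu_n\langle f_n,\cdot\rangle f_n$ and $(f_n)_n$ is an orthonormal basis of $(\ker C)^\perp$ (using $\ker(C^\dagger C)=\ker C$). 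Setting $s_n(C):=\sqrt{\mu_n}$ gives the claimed decreasing null sequence; padding with zeros and correspondingly extending $(f_n)_n$ to an infinite orthonormal system --- possible since $\mathcal X$ is infinite-dimensional --- lets one take the index set to be all of $\mathbb N$.

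Next I would set $g_n:=s_n(C)^{-1}Cf_n$ for each $n$ with $s_n(C)\neq 0$. The identity $\langle Cf_n,Cf_m\rangle=\langle f_n,C^\dagger Cf_m\rangle=\mu_m\delta_{nm}$ shows that $(g_n)_n$ is an orthonormal system in $\mathcal Y$, which is again extended to an infinite orthonormal system if necessary. Writing an arbitrary $x\in\mathcal X$ as $x=x_0+\sum_n\langle f_n,x\rangle f_n$ with $x_0\in\ker C$ (legitimate by the span identity above together with Parseval), applying $C$, and using $Cx_0=0$ and $Cf_n=s_n(C)g_n$, yields the pointwise identity $Cx=\sum_n s_n(C)\langle f_n,x\rangle g_n$; the series converges in $\mathcal Y$ because $\sum_n s_n(C)^2|\langle f_n,x\rangle|^2\leq\Vert C\Vert^2\Vert x\Vert^2<\infty$ by Bessel's inequality.

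Finally, to upgrade this to operator-norm convergence, I would estimate the tails of the partial sums $C_N:=\sum_{n=1}^N s_n(C)\langle f_n,\cdot\rangle g_n$. For $\Vert x\Vert\leq 1$, orthonormality of $(g_n)_n$, monotonicity of $(s_n(C))_n$ and Bessel's inequality give
$$
\Vert(C-C_N)x\Vert^2=\sum_{n>N}s_n(C)^2|\langle f_n,x\rangle|^2\leq s_{N+1}(C)^2\sum_{n>N}|\langle f_n,x\rangle|^2\leq s_{N+1}(C)^2\,,
$$
whence $\Vert C-C_N\Vert\leq s_{N+1}(C)\to 0$ as $N\to\infty$.

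The only genuinely nontrivial ingredient is the spectral theorem for compact positive self-adjoint operators --- or, at its core, the existence of a norm-maximizing eigenvector for $C^\dagger C$, obtained by combining compactness of $C^\dagger C$ with weak sequential compactness of the closed unit ball --- which carries all the analytic substance. Everything afterwards (the orthonormality check for $(g_n)_n$, the pointwise identity, and the operator-norm estimate) is routine bookkeeping, the only minor subtlety being the harmless extension of finite orthonormal systems, which is unproblematic in the infinite-dimensional setting assumed throughout.
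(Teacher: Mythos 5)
Your proof is correct and complete: applying the spectral theorem to $C^\dagger C$, setting $s_n(C)=\sqrt{\mu_n}$ and $g_n=s_n(C)^{-1}Cf_n$, and bounding the tail by $s_{N+1}(C)$ via Bessel's inequality is exactly the standard argument, and the padding of finite-rank cases with zeros is handled properly using the infinite-dimensionality of $\mathcal X$ and $\mathcal Y$. The paper states this lemma without proof as a standard preliminary (citing the textbook literature), and the textbook proof it implicitly relies on is precisely the one you give.
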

As the \emph{singular numbers} $(s_n(C))_{n\in\mathbb N}$ in Lemma \ref{thm_1} are uniquely determined by $C$,
the \emph{$p$-th Schatten-class} $\mathcal B^p(\mathcal X,\mathcal Y)$ is defined by
\begin{align*}
\mathcal B^p(\mathcal X,\mathcal Y)
:= \Big\lbrace C \in\mathcal K(\mathcal X,\mathcal Y)\,\Big|\,\sum\nolimits_{n=1}^\infty s_n(C)^p<\infty\Big\rbrace
\end{align*}
for $p\in [1,\infty)$. The Schatten-$p$-norm
\begin{align*}
\nu_p(C) := \Big(\sum_{n=1}^\infty s_n(C)^p\Big)^{1/p}
\end{align*}
turns $\mathcal B^p(\mathcal X,\mathcal Y)$ into a Banach space. Moreover, for $p=\infty$, we 
identify $\mathcal B^\infty (\mathcal X,\mathcal Y)$ with the set of all compact operators 
$\mathcal K(\mathcal X,\mathcal Y)$ equipped with the norm
\begin{align*}
\nu_\infty(C) := \sup_{n \in \mathbb{N}}s_n(C) = s_1(C)\,.
\end{align*}
Note that $\nu_\infty(C)$ coincides with the ordinary operator norm $\|C\|$. Hence 
$\mathcal B^\infty (\mathcal X,\mathcal Y)$ constitutes a closed subspace of 
$\mathcal B (\mathcal X,\mathcal Y)$ and thus a Banach space, too. The following
results can be found in \cite[Coro.~XI.9.4 \& Lemma XI.9.9]{dunford1963linear}.

\begin{lemma}\label{lemma_10}
\begin{itemize}
\item[(a)] Let $p\in [1,\infty]$. Then, for all $S,T\in\mathcal B(\mathcal X)$ and 
$C\in\mathcal B^p(\mathcal X)$, one has
\begin{align*}
\nu_p(SCT)\leq\Vert S\Vert\nu_p(C)\Vert T\Vert\,.
\end{align*}
\item[(b)] Let $1\leq p\leq q\leq\infty$. Then $\mathcal B^p(\mathcal X,\mathcal Y)\subseteq\mathcal B^q(\mathcal X,\mathcal Y)$ and $\nu_p( C)\geq\nu_q(C)$ for all $C\in\mathcal B^p(\mathcal X,\mathcal Y)$.
\end{itemize}
\end{lemma}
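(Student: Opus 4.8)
The plan is to reduce both assertions to elementary facts about singular numbers, the workhorse being the \emph{approximation-number characterization}: for every $C\in\mathcal K(\mathcal X,\mathcal Y)$ and $n\in\mathbb N$,
\[
s_n(C)=\inf\bigl\{\,\|C-F\|\ :\ F\in\mathcal B(\mathcal X,\mathcal Y),\ \operatorname{rank}F\le n-1\,\bigr\}.
\]
I would prove this first from the Schmidt decomposition (Lemma~\ref{thm_1}). For ``$\le$'' one truncates the Schmidt series of $C$ after $n-1$ terms; the tail $\sum_{k\ge n}s_k(C)\langle f_k,\cdot\rangle g_k$ is again of Schmidt type and hence has operator norm equal to its largest coefficient $s_n(C)$, since $(s_k(C))_k$ is decreasing. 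For ``$\ge$'', given any bounded $F$ with $\operatorname{rank}F\le n-1$, the kernel of $F$ has codimension at most $n-1$, so it meets the $n$-dimensional subspace $\operatorname{span}\{f_1,\dots,f_n\}$ in a nonzero vector $x=\sum_{k=1}^n c_kf_k$ with $\sum_{k=1}^n|c_k|^2=1$; then $\|C-F\|\ge\|(C-F)x\|=\|Cx\|=\bigl(\sum_{k=1}^n|c_k|^2 s_k(C)^2\bigr)^{1/2}\ge s_n(C)$.

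For part (a), I would observe that $\operatorname{rank}F\le n-1$ implies $\operatorname{rank}(SFT)\le n-1$, while $\|SCT-SFT\|=\|S(C-F)T\|\le\|S\|\,\|C-F\|\,\|T\|$ by submultiplicativity of the operator norm. Taking the infimum over all such $F$ and invoking the characterization gives $s_n(SCT)\le\|S\|\,s_n(C)\,\|T\|$ for every $n$. Raising to the $p$-th power and summing (or taking the supremum when $p=\infty$) yields $\nu_p(SCT)\le\|S\|\,\nu_p(C)\,\|T\|$; in particular $SCT\in\mathcal B^p(\mathcal X)$, which also re-proves the two-sided-ideal property used in the introduction.

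For part (b), the inclusion holds because $C\in\mathcal B^p(\mathcal X,\mathcal Y)$ forces $s_n(C)\to 0$, hence $s_n(C)\le 1$ and therefore $s_n(C)^q\le s_n(C)^p$ for all large $n$, so $\sum_n s_n(C)^q<\infty$, i.e.\ $C\in\mathcal B^q(\mathcal X,\mathcal Y)$ (the case $q=\infty$ being automatic since $C$ is compact). The estimate $\nu_p(C)\ge\nu_q(C)$ is nothing but the monotonicity of $\ell^r$-norms applied to the sequence $(s_n(C))_n$: after rescaling so that $\nu_p(C)=1$, every $s_n(C)\le 1$, whence $\sum_n s_n(C)^q\le\sum_n s_n(C)^p=1$ for $q<\infty$, and $\sup_n s_n(C)\le 1$ for $q=\infty$.

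The only step with genuine content is the approximation-number characterization, and within it the lower bound, where the dimension count and the ordering of the $s_k(C)$ are both essential; everything else is bookkeeping with the Schmidt decomposition and standard sequence-space inequalities. One could instead pass to the positive compact operator $|C|=(C^\dagger C)^{1/2}$ and cite the Courant--Fischer min--max principle together with Weyl's monotonicity, but the approximation-number route is self-contained and handles non-normal $C$ and the two-sided multiplier $S(\cdot)T$ in one stroke.
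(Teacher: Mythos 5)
Your argument is correct. Note, however, that the paper does not prove this lemma at all: it is quoted as a known result with a reference to Dunford--Schwartz (Coro.~XI.9.4 and Lemma~XI.9.9), so there is no in-paper proof to compare against line by line. What you supply is a self-contained derivation from the approximation-number characterization $s_n(C)=\inf\{\|C-F\|:\operatorname{rank}F\le n-1\}$, and every step checks out: the upper bound by truncating the Schmidt series, the lower bound via the dimension count $\ker F\cap\operatorname{span}\{f_1,\dots,f_n\}\neq\{0\}$ together with the monotonicity of the $s_k(C)$ (with the trivial case $s_n(C)=0$ implicitly absorbed), the rank and norm estimates for $SFT$ in part (a), and the $\ell^p\hookrightarrow\ell^q$ monotonicity after normalizing $\nu_p(C)=1$ in part (b), including the $q=\infty$ endpoint. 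This route buys more than the citation does: it yields the pointwise inequality $s_n(SCT)\le\|S\|\,s_n(C)\,\|T\|$ for every $n$, which is strictly stronger than the $\nu_p$ estimate and re-derives the two-sided ideal property the paper invokes later; it also works uniformly in $p$ and for non-separable $\mathcal X$, exactly the generality the lemma claims. The alternative you mention (Courant--Fischer on $|C|$ plus Weyl monotonicity) would work too, but as you say the approximation-number argument handles the two-sided multiplier in one stroke.
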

\noindent Note that due to (a), all Schatten-classes $\mathcal B^p(\mathcal X)$ constitute -- just
like the compact operators -- a two-sided ideal in the $C^*$-algebra of all bounded operators
$\mathcal B(\mathcal X)$.\medskip

\begin{lemma}\label{lemma_2b}
Let $T\in\mathcal K(\mathcal X)$ and $(e_k)_{k\in\mathbb N}$ be any orthonormal system in $\mathcal X$. Then\vspace{2pt}
\begin{itemize}
\item[(a)]
$
\displaystyle
\sum\nolimits_{k=1}^n|\langle e_k,Te_k\rangle|
\leq\sum\nolimits_{k=1}^n s_k(T)
$
for all $n\in\mathbb N$ and\vspace{6pt}
\item[(b)]
$\lim_{k \to \infty} \langle e_k,Te_k\rangle = 0\,.$
\end{itemize}
\end{lemma}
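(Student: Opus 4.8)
The plan is to work directly from the Schmidt decomposition $T = \sum_{m=1}^\infty s_m(T)\langle f_m,\cdot\rangle g_m$ supplied by Lemma~\ref{thm_1} (applied with $\mathcal Y=\mathcal X$), which gives $\langle e_k,Te_k\rangle = \sum_{m=1}^\infty s_m(T)\langle f_m,e_k\rangle\langle e_k,g_m\rangle$ and hence the pointwise estimate $|\langle e_k,Te_k\rangle| \le \sum_{m=1}^\infty s_m(T)\,|\langle f_m,e_k\rangle|\,|\langle e_k,g_m\rangle|$. Both parts are then obtained by suitable bookkeeping on this double series.

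For part (a), I would sum the pointwise estimate over $k=1,\dots,n$ and interchange the two sums (legitimate since all terms are nonnegative), obtaining $\sum_{k=1}^n|\langle e_k,Te_k\rangle|\le\sum_{m=1}^\infty s_m(T)\,a_m$ with $a_m:=\sum_{k=1}^n|\langle f_m,e_k\rangle|\,|\langle e_k,g_m\rangle|$. A Cauchy--Schwarz in the $k$-sum followed by Bessel's inequality for the two orthonormal systems $(f_m)$ and $(g_m)$ tested against the (truncated) orthonormal system $(e_k)$ yields $0\le a_m\le 1$ for every $m$; interchanging sums once more and applying Cauchy--Schwarz in the $m$-sum against $\|e_k\|^2=1$ yields $\sum_{m=1}^\infty a_m\le n$. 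It then remains to prove the elementary inequality: if $(s_m)$ is a decreasing nonnegative sequence and $0\le a_m\le 1$ with $\sum_m a_m\le n$, then $\sum_m s_m a_m\le\sum_{m=1}^n s_m$. This follows by writing $\sum_m s_m a_m-\sum_{m=1}^n s_m=\sum_{m\le n}s_m(a_m-1)+\sum_{m>n}s_m a_m$ and bounding each $s_m$ by $s_n$ (using monotonicity together with $a_m-1\le 0$ for $m\le n$ and $a_m\ge 0$ for $m>n$), which collapses the right-hand side to $s_n\bigl(\sum_m a_m-n\bigr)\le 0$.

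For part (b), I would use a finite-rank truncation. Given $\varepsilon>0$, choose $N$ with $s_{N+1}(T)<\varepsilon$, which is possible because $(s_n(T))_{n\in\mathbb N}$ is a null sequence, and split $T=T_N+R_N$ with $T_N=\sum_{m=1}^N s_m(T)\langle f_m,\cdot\rangle g_m$ and $\|R_N\|=s_{N+1}(T)<\varepsilon$. Then $|\langle e_k,Te_k\rangle|\le\sum_{m=1}^N s_m(T)\,|\langle f_m,e_k\rangle|\,|\langle e_k,g_m\rangle|+\varepsilon$. For each fixed $m$, Bessel's inequality gives $\sum_k|\langle f_m,e_k\rangle|^2\le\|f_m\|^2=1<\infty$, hence $\langle f_m,e_k\rangle\to 0$, and likewise $\langle e_k,g_m\rangle\to 0$, as $k\to\infty$; since the remaining sum is finite it tends to $0$, so $\limsup_{k\to\infty}|\langle e_k,Te_k\rangle|\le\varepsilon$. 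Letting $\varepsilon\downarrow 0$ gives the assertion.

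The main obstacle is the rearrangement-type inequality underlying part (a): one must recognize that, among all admissible weights $(a_m)$, the quantity $\sum_m s_m a_m$ is maximized by placing full weight on the $n$ largest singular numbers, and one must carefully verify the two constraints $0\le a_m\le 1$ and $\sum_m a_m\le n$ through the Tonelli/Cauchy--Schwarz/Bessel bookkeeping. Once these facts are established, both (a) and (b) follow quickly, with (b) essentially a soft consequence of compactness via finite-rank approximation.
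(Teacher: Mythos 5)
Your argument for part (a) is exactly the paper's: Schmidt decomposition, the weights $\lambda_m$ (your $a_m$), the bounds $a_m\le 1$ and $\sum_m a_m\le n$ via Cauchy--Schwarz and Bessel, and the observation that the extremal configuration puts full weight on the $n$ largest singular numbers. You in fact do one thing better: the paper merely asserts that the maximum of $\sum_m s_m\lambda_m$ under these constraints is attained at $\lambda_1=\dots=\lambda_n=1$, whereas you prove the underlying rearrangement inequality cleanly by splitting $\sum_m s_m a_m-\sum_{m=1}^n s_m=\sum_{m\le n}s_m(a_m-1)+\sum_{m>n}s_m a_m$ and comparing each coefficient with $s_n$. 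For part (b) the paper gives no argument at all and simply cites \cite[Lemma 16.17]{MeiseVogt}; your finite-rank truncation plus Bessel's inequality (forcing $\langle f_m,e_k\rangle\to 0$ and $\langle e_k,g_m\rangle\to 0$ for each fixed $m$) is a correct, self-contained substitute. Both parts of your proposal are sound.
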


\begin{proof}
(a) Consider a Schmidt decomposition $\sum_{m=1}^\infty s_m(T)\langle f_m,\cdot\rangle g_m$ of $T$. Then
\begin{align*}
\sum_{k=1}^n|\langle e_k,Te_k\rangle|\leq \sum_{m=1}^\infty s_m(T)\Big( \underbrace{\sum_{k=1}^n |\langle e_k,f_m\rangle\langle g_m,e_k\rangle|}_{=:\lambda_m} \Big)\,.
\end{align*}
Note that by Cauchy-Schwarz and Bessel's inequality one has 
\begin{align*}
\lambda_m\leq \Big(\sum_{k=1}^n |\langle e_k,f_m\rangle|^2\Big)^{1/2}\Big(\sum_{k=1}^n |\langle g_m,e_k\rangle|^2\Big)^{1/2}\leq 1
\end{align*}
for all $m\in\mathbb N$. On the other hand, Cauchy-Schwarz and Bessel's inequality also imply
\begin{align*}
\sum_{m=1}^\infty \lambda_m&\leq \sum_{k=1}^n \Big(\sum_{m=1}^\infty |\langle e_k,f_m\rangle|^2\Big)^{1/2}\Big(\sum_{m=1}^\infty |\langle g_m,e_k\rangle|^2\Big)^{1/2}\leq \sum_{k=1}^n \|e_k  \|^2=n\,.
\end{align*}
Hence an upper bound of $\sum_{m=1}^\infty s_m(T)\lambda_m$ is given by choosing $\lambda_1=\ldots=\lambda_n=1$
and $\lambda_j=0$ whenever $j>n$, since $s_1(T)\geq s_2(T)\geq\ldots$ by construction. This shows the 
desired inequality. A proof of (b) can be found, e.g., in \cite[Lemma 16.17]{MeiseVogt}.
\end{proof}

Now for any $C\in\mathcal B^1(\mathcal X)$, the trace of $C$ is defined via
\begin{align}\label{eq:trace}
\operatorname{tr}(C):=\sum\nolimits_{i\in I}\langle f_i,Cf_i\rangle\,,
\end{align}
where $(f_i)_{i\in I}$ can be any orthonormal basis of $\mathcal X$. The trace is well-defined as
one can show that the right-hand side of \eqref{eq:trace} does not depend on the choice of 
$(f_i)_{i\in I}$. Important properties are the following, cf. \cite[Lemma XI.9.14]{dunford1963linear}.

\begin{lemma}\label{lemma_nu_hoelder}
Let $C\in\mathcal B^p(\mathcal X)$ and $T\in\mathcal B^q(\mathcal X)$ with $p,q\in [1,\infty]$ conjugate. Then one has $CT,TC\in\mathcal B^1(\mathcal X)$ with
\begin{align}
\operatorname{tr}(CT)&=\operatorname{tr}(TC)\nonumber\\
|\operatorname{tr}(CT)|&\leq \nu_p(C)\nu_q(T)\,.\label{eq:4}
\end{align}
\end{lemma}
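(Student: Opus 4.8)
The plan is to reduce the entire statement to the Schatten--H\"older inequality
\[
CT\in\mathcal B^1(\mathcal X)\qquad\text{with}\qquad \nu_1(CT)\leq\nu_p(C)\,\nu_q(T),
\]
from which both \eqref{eq:4} and the trace identity follow easily. For the endpoint cases $p=1,\ q=\infty$ and $p=\infty,\ q=1$ this inequality is already contained in Lemma \ref{lemma_10}(a) with the bounded factor $S$ chosen to be the identity, since then $\nu_1(CT)\leq\nu_1(C)\,\|T\|=\nu_p(C)\,\nu_q(T)$. For $p,q\in(1,\infty)$ I would pass to singular numbers: first establish the Weyl-type inequality $\sum_{k=1}^{n}s_k(CT)\leq\sum_{k=1}^{n}s_k(C)\,s_k(T)$ for every $n\in\mathbb N$, and then apply the classical H\"older inequality for sequences to $\big(s_k(C)\big)_k\in\ell^p$ and $\big(s_k(T)\big)_k\in\ell^q$ to obtain $\sum_{k=1}^{\infty}s_k(CT)\leq\nu_p(C)\,\nu_q(T)<\infty$; this yields $CT\in\mathcal B^1(\mathcal X)$ and the norm bound simultaneously. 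Exchanging the roles of $(C,p)$ and $(T,q)$ gives $TC\in\mathcal B^1(\mathcal X)$ in exactly the same way.

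The Weyl-type inequality is the only step that is not purely formal, and I expect it to be the main obstacle. I would deduce it from Horn's multiplicative inequality $\prod_{k=1}^{n}s_k(CT)\leq\prod_{k=1}^{n}\big(s_k(C)\,s_k(T)\big)$ --- itself a consequence of the multiplicativity of the $n$-th exterior power together with the identity $\|\Lambda^n X\|=\prod_{k=1}^{n}s_k(X)$ --- combined with the elementary fact that, for decreasing nonnegative sequences, domination of all partial products forces domination of all partial sums. (For $p=q=2$ the inequality also follows more elementarily from the polar decomposition of $CT$ and the Cauchy--Schwarz inequality, which bounds $\nu_1(CT)$ by $\nu_2(C)\,\nu_2(T)$; for general conjugate exponents the singular-value inequality seems unavoidable, and may equally well be quoted from \cite[Chapter XI.9]{dunford1963linear}, the reference that our formulation follows.)

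Granting $CT\in\mathcal B^1(\mathcal X)$, inequality \eqref{eq:4} is immediate. Fix any orthonormal basis $(f_i)_{i\in I}$ of $\mathcal X$; only countably many of the scalars $\langle f_i,CTf_i\rangle$ can be nonzero, since $CT$ is Hilbert--Schmidt and hence $\sum_{i\in I}\|CTf_i\|^2=\nu_2(CT)^2<\infty$. Applying Lemma \ref{lemma_2b}(a) to $CT$ together with this (countable) orthonormal system and letting $n\to\infty$ gives
\[
|\operatorname{tr}(CT)|=\Big|\sum_{i\in I}\langle f_i,CTf_i\rangle\Big|\leq\sum_{i\in I}|\langle f_i,CTf_i\rangle|\leq\sum_{k=1}^{\infty}s_k(CT)=\nu_1(CT)\leq\nu_p(C)\,\nu_q(T),
\]
so the defining series of $\operatorname{tr}(CT)$ converges absolutely and \eqref{eq:4} holds.

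It remains to prove $\operatorname{tr}(CT)=\operatorname{tr}(TC)$, which I would obtain by truncation and continuity. Fix a Schmidt decomposition $C=\sum_{n=1}^{\infty}s_n(C)\langle\phi_n,\cdot\rangle\psi_n$ and put $C_N:=\sum_{n=1}^{N}s_n(C)\langle\phi_n,\cdot\rangle\psi_n$. Since $C_N$ has finite rank, $\operatorname{tr}(C_NT)=\operatorname{tr}(TC_N)$ follows from a direct computation with Parseval's identity, both traces reducing to $\sum_{n=1}^{N}s_n(C)\,\langle\phi_n,T\psi_n\rangle$ after interchanging the finite sum over $n$ with the sum over a basis. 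By the H\"older estimate already established, $|\operatorname{tr}(CT)-\operatorname{tr}(C_NT)|=|\operatorname{tr}((C-C_N)T)|\leq\nu_p(C-C_N)\,\nu_q(T)$, and $\nu_p(C-C_N)\to 0$ as $N\to\infty$, since it equals $\big(\sum_{n=N+1}^{\infty}s_n(C)^p\big)^{1/p}$ when $p<\infty$ and $s_{N+1}(C)$ when $p=\infty$, both tending to $0$ because $\big(s_n(C)\big)_n$ lies in $\ell^p$ (respectively is a null sequence). The analogous bound controls $|\operatorname{tr}(TC)-\operatorname{tr}(TC_N)|$, so letting $N\to\infty$ yields $\operatorname{tr}(CT)=\operatorname{tr}(TC)$.
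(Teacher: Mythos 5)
Your argument is correct. Note, however, that the paper does not prove this lemma at all: it simply quotes it from Dunford--Schwartz (Lemma XI.9.14), so there is no internal proof to compare against. What you have written is essentially a sound reconstruction of the standard textbook argument. The endpoint cases $\{p,q\}=\{1,\infty\}$ do follow from Lemma \ref{lemma_10}(a) with $S=I$; for $p,q\in(1,\infty)$ your chain --- Horn's multiplicative inequality via exterior powers, Weyl's lemma converting domination of partial products into domination of partial sums (this step is classical but not quite ``elementary''; it is usually done by taking logarithms and invoking the Tomi\'c--Weyl majorization theorem for the convex increasing function $\exp$), and then H\"older for the sequences $(s_k(C))_k\in\ell^p$, $(s_k(T))_k\in\ell^q$ --- correctly yields $\nu_1(CT)\leq\nu_p(C)\nu_q(T)$ and hence $CT\in\mathcal B^1(\mathcal X)$. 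Your care in reducing to a countable orthonormal system before invoking Lemma \ref{lemma_2b}(a) is appropriate, since $\mathcal X$ is not assumed separable here, and the finite-rank truncation plus the already-established H\"older bound gives $\operatorname{tr}(CT)=\operatorname{tr}(TC)$ cleanly. The only thing your route costs is that the singular-value inequalities (Horn/Weyl) are themselves nontrivial imports; as you observe, one may equally well cite them, or the whole lemma, from \cite[Ch.~XI.9]{dunford1963linear}, which is exactly what the paper does.
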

\noindent
Note that the space of so called Hilbert-Schmidt operators $\mathcal B^2(\mathcal X)$ turns into a
Hilbert space under the scalar product $\langle C,T\rangle:=\operatorname{tr}(C^\dagger T)$
\cite[Prop.~16.22]{MeiseVogt}.

\subsection{Set Convergence}

In order to transfer results about convexity and star-shapedness of the $C$-numerical
range of matrices to Schatten-class operators, we need some basic facts about set convergence. 
We will use the Hausdorff metric on compact subsets (of $\mathbb C$) and the associated notion
of convergence, see, e.g., \cite{nadler1978}.\medskip

The distance between $z \in \mathbb C$ and any non-empty compact subset $A \subseteq \mathbb C$ is
defined by
\begin{align}\label{eq.Hausdorff-1}
d(z,A) := \min_{w \in A} d(z,w) = \min_{w \in A} |z-w|\,.
\end{align}
Based on \eqref{eq.Hausdorff-1}, the \emph{Hausdorff metric} $\Delta$ on the set of all non-empty
compact subsets of $\mathbb C$ is given by
\begin{align*}
\Delta(A,B) := \max\Big\lbrace \max_{z \in A}d(z,B),\max_{z \in B}d(z,A) \Big\rbrace.
\end{align*}
\noindent
The following result is proven in \cite[Lemma 2.5]{dirr_ve}.

\begin{lemma}\label{lemma_5}
Let $(A_n)_{n\in\mathbb N}$ and $(B_n)_{n\in\mathbb N}$ be bounded sequences of non-empty compact subsets
of $\mathbb C$ such that $\lim_{n\to\infty}A_n = A$, $\lim_{n\to\infty}B_n = B$ and let $(z_n)_{n\in\mathbb N}$
be any sequence of complex numbers with $\lim_{n\to\infty}z_n = z$. Then the following statements hold.
\begin{itemize}
\item[(a)] 
If $A_n\subseteq B_n$ for all $n\in\mathbb N$, then $A \subseteq B$.\vspace{4pt}
\item[(b)] 
The sequence $(\operatorname{conv}(A_n))_{n\in\mathbb N}$ of compact subsets converges to
$\operatorname{conv}(A)$, i.e.
\begin{align*}
\lim_{n\to\infty}\operatorname{conv}(A_n) = \operatorname{conv}(A)\,.
\end{align*}
\item[(c)] 
If $A_n$ is convex for all $n\in\mathbb N$, then $A$ is convex.\vspace{4pt}
\item[(d)] 
If $A_n$ is star-shaped with respect to $z_n$ for all $n\in\mathbb N$, then $A$ is star-shaped
with respect to $z$.
\end{itemize}
\end{lemma}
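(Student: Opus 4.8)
The plan is to extract two elementary features of Hausdorff convergence and then deduce all four items from them almost mechanically. Writing $\Delta$ for the Hausdorff metric and using that each of the two quantities inside the maximum defining $\Delta(A,B)$ is bounded by $\Delta(A,B)$, together with the fact that a non-empty compact subset of $\mathbb C$ contains a nearest point to any given point, I would first record: \emph{(I)} if $x_n\in A_n$ for every $n$ and $x_n\to x$ while $A_n\to A$, then $x\in A$, since $d(x,A)\le|x-x_n|+d(x_n,A)\le|x-x_n|+\Delta(A_n,A)\to0$ and $A$ is closed; and \emph{(II)} if $A_n\to A$ and $a\in A$, then one can choose $a_n\in A_n$ with $a_n\to a$, namely a point of $A_n$ nearest to $a$, for which $|a_n-a|=d(a,A_n)\le\Delta(A_n,A)\to0$.

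Granting (I) and (II), parts (a), (c) and (d) become routine select-and-pass-to-the-limit arguments with no genuine obstacle. For (a), given $a\in A$ use (II) to get $a_n\in A_n\subseteq B_n$ with $a_n\to a$, then apply (I) to this sequence inside $B_n$ to conclude $a\in B$. For (c), given $a,b\in A$ and $\lambda\in[0,1]$ pick $a_n\to a$ and $b_n\to b$ in $A_n$ via (II); convexity of $A_n$ gives $\lambda a_n+(1-\lambda)b_n\in A_n$, and its limit $\lambda a+(1-\lambda)b$ lies in $A$ by (I). For (d), note first that star-shapedness of $A_n$ with respect to $z_n$ forces $z_n\in A_n$, hence $z\in A$ by (I); then for $a\in A$ and $\lambda\in[0,1]$ choose $a_n\to a$ in $A_n$, observe $\lambda z_n+(1-\lambda)a_n\in A_n$, and apply (I) to its limit $\lambda z+(1-\lambda)a$.

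The only item requiring more than soft limit arguments is (b), and that is where I expect the actual work to sit. I would reduce it to the statement that taking convex hulls is non-expansive for the Hausdorff metric, i.e. $\Delta(\operatorname{conv}(A),\operatorname{conv}(B))\le\Delta(A,B)$ for non-empty compact $A,B\subseteq\mathbb C$; then $\Delta(\operatorname{conv}(A_n),\operatorname{conv}(A))\le\Delta(A_n,A)\to0$ gives the claim, and part (c) is not even needed for it. To prove non-expansiveness I would fix $z\in\operatorname{conv}(A)$, invoke Carathéodory's theorem in $\mathbb R^2\cong\mathbb C$ to write $z=\sum_i t_i a_i$ with $a_i\in A$, $t_i\ge0$, $\sum_i t_i=1$, pick $b_i\in B$ realizing $|a_i-b_i|=d(a_i,B)\le\Delta(A,B)$, and bound $\big|z-\sum_i t_ib_i\big|\le\sum_i t_i|a_i-b_i|\le\Delta(A,B)$, so that $d(z,\operatorname{conv}(B))\le\Delta(A,B)$; the symmetric inequality finishes it. This argument also uses, implicitly, that $\operatorname{conv}$ sends compact sets to compact sets (so that $\Delta$ even applies to $\operatorname{conv}(A_n)$ and $\operatorname{conv}(A)$), which is itself a consequence of Carathéodory together with compactness of the simplex on three points. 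Thus the main — and essentially only — obstacle is the Carathéodory bookkeeping behind (b), everything else being a direct application of (I) and (II).
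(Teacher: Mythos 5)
Your proof is correct. Note that the paper does not prove this lemma itself but only cites Lemma 2.5 of its predecessor; your route --- the two extraction principles (every Hausdorff limit absorbs limits of selections, and every point of the limit is approximable by selections) for (a), (c), (d), plus the non-expansiveness $\Delta(\operatorname{conv}(A),\operatorname{conv}(B))\le\Delta(A,B)$ via convex combinations for (b) --- is the standard argument and essentially the one used there.
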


\section{Results}\label{sec:results}

Let $\mathcal H$ denote an arbitrary infinite-dimensional separable complex (i.s.c.) Hilbert space. Our goal will be to carry over the characterizations of the geometry of the $C$-numerical range $W_C (T)$, like star-shapedness or convexity, from the trace class \cite{dirr_ve} to conjugate Schatten-class operators on $\mathcal H$. 

\begin{definition}\label{defi_1}
Let $p,q\in [1,\infty]$ be conjugate. Then for $C\in\mathcal B^p(\mathcal H)$
and $T\in\mathcal B^q(\mathcal H)$, we define the \emph{$C$-numerical range} of $T$ to be
\begin{align*}
W_C (T):=\lbrace \operatorname{tr}(CU^\dagger TU)\,|\,U\in\mathcal B(\mathcal H)\text{ unitary}\rbrace\,.
\end{align*}
\end{definition}

\noindent
Note that the trace $\operatorname{tr}(CU^\dagger TU)$ is well-defined due to Lemma \ref{lemma_10}
and \ref{lemma_nu_hoelder}.

\medskip

Moreover, throughout this paper we need some mechanism to associate bounded operators on $\mathcal H$
with matrices. In doing so, let $(e_n)_{n\in\mathbb N} $ be some orthonormal basis of $\mathcal H$ and
let $(\hat e_i)_{i=1}^n$ be the standard basis of $\mathbb C^n$. For any $n\in\mathbb N$ we define 
\begin{align*}
\Gamma_n:\mathbb C^n\to \mathcal H,\qquad \hat{e_i}\mapsto \Gamma_n(\hat e_i):=e_i
\end{align*}
and its linear extension to all of $\mathbb C^n$. Next, let 
\begin{align}\label{cut_out_operator}
[\;\cdot\;]_n:\mathcal B(\mathcal H)\to\mathbb C^{n\times n},\qquad A\mapsto [A]_n:=\Gamma_n^\dagger A\Gamma_n
\end{align}
be the operator which ``cuts out'' the upper $n\times n$ block of (the matrix representation of) $A$ 
with respect to $(e_n)_{n\in\mathbb N} $.

\subsection{Star-Shapedness}

Our strategy is to transfer well-known properties of the finite-dimensional $[C]_n$-numerical range
of $[T]_n$ to $W_C(T)$ via the convergence results of Lemma \ref{lemma_5}.

\begin{lemma}\label{lemma_proj_strong_conv}
Let $p\in[1,\infty]$, $C \in \mathcal B^p(\mathcal H)$ and $(S_n)_{n\in\mathbb N}$ be a sequence in 
$\mathcal B(\mathcal H)$ which converges strongly to $S \in\mathcal B(\mathcal H)$. Then one has
$S_n C \to SC$, $CS_n^\dagger \to CS^\dagger$, and $S_nCS_n^\dagger \to SCS^\dagger$ for $n \to \infty$
with respect to the norm $\nu_p$.
\end{lemma}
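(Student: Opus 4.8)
The plan is to reduce everything to the case of finite-rank operators, where strong convergence of $(S_n)$ is easily leveraged, and then pass to the general case by a density/$\varepsilon$-approximation argument using the ideal estimate from Lemma \ref{lemma_10}(a). First I would observe that since $(S_n)$ converges strongly, the uniform boundedness principle gives $M:=\sup_n\|S_n\|<\infty$, and we may assume $\|S\|\le M$ as well. For the three claims it suffices to treat $S_nC\to SC$ and $S_nCS_n^\dagger\to SCS^\dagger$, since $CS_n^\dagger\to CS^\dagger$ follows from the first applied to $C^\dagger$ and $(S_n^\dagger)$ after taking adjoints (note $\nu_p(A^\dagger)=\nu_p(A)$), and the third then follows by writing $S_nCS_n^\dagger - SCS^\dagger = (S_nC-SC)S_n^\dagger + SC(S_n^\dagger - S^\dagger)$ and bounding each term with Lemma \ref{lemma_10}(a).

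For the core estimate $\nu_p(S_nC-SC)\to 0$: fix $\varepsilon>0$ and use the Schmidt decomposition of $C$ (Lemma \ref{thm_1}) together with $\sum_n s_n(C)^p<\infty$ (or, for $p=\infty$, $s_n(C)\to 0$) to choose a finite-rank operator $F=\sum_{m=1}^N s_m(C)\langle f_m,\cdot\rangle g_m$ with $\nu_p(C-F)<\varepsilon/(2M)$ (respectively $\nu_\infty(C-F)<\varepsilon/(2M)$ for $p=\infty$, since the tail satisfies $\nu_\infty(C-F)=s_{N+1}(C)$). Then
\begin{align*}
\nu_p(S_nC - SC) \le \nu_p\big((S_n-S)F\big) + \nu_p\big((S_n-S)(C-F)\big) \le \nu_p\big((S_n-S)F\big) + 2M\cdot\frac{\varepsilon}{2M}\,,
\end{align*}
so it remains to show $\nu_p((S_n-S)F)\to 0$. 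But $(S_n-S)F = \sum_{m=1}^N s_m(C)\langle f_m,\cdot\rangle\,(S_n-S)g_m$ is a rank-$\le N$ operator whose $\nu_p$-norm is controlled by (a constant times) $\max_{1\le m\le N}\|(S_n-S)g_m\|$; for instance $\nu_p\big((S_n-S)F\big)\le \sum_{m=1}^N s_m(C)\,\|(S_n-S)g_m\|$, using that $\langle f_m,\cdot\rangle\,h$ has $\nu_p$-norm $\|h\|$ and the triangle inequality for $\nu_p$. Since $S_n\to S$ strongly, each $\|(S_n-S)g_m\|\to 0$, hence the whole finite sum tends to $0$; choosing $n$ large makes it $<\varepsilon/2$, giving $\nu_p(S_nC-SC)<\varepsilon$ for $n$ large.

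The main obstacle is not conceptual but a matter of being careful about the two regimes $p<\infty$ and $p=\infty$ simultaneously: one must ensure the finite-rank truncation of $C$ is close in the \emph{correct} norm $\nu_p$ (the tail bound is a convergent $p$-series tail for $p<\infty$ but just $s_{N+1}(C)$ for $p=\infty$), and one must only invoke \emph{strong} convergence — never norm convergence — of $(S_n)$, which is exactly why the reduction to finitely many vectors $g_1,\dots,g_N$ is essential. Everything else is bookkeeping with Lemma \ref{lemma_10}(a) and the triangle inequality for $\nu_p$.
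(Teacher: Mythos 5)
Your proposal is correct and follows essentially the same route as the paper's proof: truncate the Schmidt decomposition of $C$ to a finite-rank part, control the tail via Lemma \ref{lemma_10}(a) and the uniform bound $M$ from the uniform boundedness principle, use strong convergence only on the finitely many vectors $g_1,\dots,g_N$ (where $\nu_p(\langle f_m,\cdot\rangle h)=\Vert h\Vert$), and deduce the remaining two assertions from $\nu_p(A^\dagger)=\nu_p(A)$ together with submultiplicativity. One point to state more carefully: the claim $CS_n^\dagger\to CS^\dagger$ should be obtained from the identity $\nu_p(CS_n^\dagger-CS^\dagger)=\nu_p(S_nC^\dagger-SC^\dagger)$ and the already-proven first assertion applied to $C^\dagger$ and the \emph{original} sequence $(S_n)$ — not by applying the lemma to the sequence $(S_n^\dagger)$, since strong convergence of $(S_n)$ does not imply strong convergence of $(S_n^\dagger)$ (e.g.\ powers of the shift). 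With that rephrasing, and after fixing the harmless constant bookkeeping ($\varepsilon/2+\varepsilon$ is $3\varepsilon/2$, not $\varepsilon$), the argument is complete.
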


\begin{proof}
The cases $p=1$ and $p=\infty$ are proven in \cite[Lemma 3.2]{dirr_ve}. As the proof for $p\in(1,\infty)$
is essentially the same, we sketch only the major differences. First, choose $K\in\mathbb N$ such that
\begin{align*}
\sum_{k=K+1}^\infty s_k(C)^p<\frac{\varepsilon^p}{(3\kappa)^p}\,,
\end{align*}
where $\kappa > 0$ satisfies $\|S\|\leq\kappa$ and $\|S_n\|\leq\kappa$ for all $n\in\mathbb N$. 
The existence of the constant $\kappa > 0$ is guaranteed by the uniform boundedness principle.
Then decompose $C=\sum_{k=1}^\infty s_k(C)\langle e_k,\cdot\rangle f_k$ into $C = C_1 + C_2$ with 
$C_1 := \sum_{k=1}^K s_k(C)\langle e_k,\cdot\rangle f_k$ finite-rank. By Lemma \ref{lemma_10}
one has
\begin{align*}
\nu_p(SC - S_nC)\leq \nu_p(SC_1-S_n C_1)+\Vert S\Vert\nu_p(C_2)+\Vert S_n\Vert\nu_p(C_2)
<\nu_p(SC_1-S_nC_1 )+\frac{2\varepsilon}{3}\,.
\end{align*}
Thus, what remains is to choose $N\in\mathbb N$ such that $\nu_p(SC_1-S_nC_1)<\varepsilon/3$
for all $n\geq N$. Starting from
\begin{align*}
\nu_p(SC_1-S_n C_1)
\leq  \sum_{k=1}^K s_k(C)\nu_p\big(\langle e_k,\cdot\rangle (Sf_k-S_nf_k)\big)=\sum_{k=1}^K s_k(C) \Vert Sf_k-S_nf_k \Vert\,,
\end{align*}
the strong convergence of $(S_n)_{n\in\mathbb N}$ yields $N \in \mathbb N$ such that
\begin{align*}
\Vert Sf_k - S_nf_k \Vert<\frac{\varepsilon}{3\sum_{k=1}^Ks_k(C)}
\end{align*}
for $k = 1, \dots, K$ and all $n\geq N$. This shows $\nu_p(SC - S_nC)\to 0$ as $n\to\infty$. All
other assertions are an immediate consequence of $\nu_p(A) = \nu_p(A^\dagger)$ for all 
$A \in \mathcal B^p(\mathcal H)$ and 
\begin{align*}
\nu_p(SCS^\dagger - S_nCS_n^\dagger) & \leq \Vert S\Vert\nu_p(CS^\dagger-CS_n^\dagger)+\nu_p(SC-S_nC)\Vert S_n\Vert \\
& \leq \kappa \big(\nu_p(CS^\dagger-CS_n^\dagger) + \nu_p(SC-S_nC)\big)\,.\qedhere
\end{align*}
\end{proof}

\begin{lemma}\label{strong_tr_conv}
Let $C\in\mathcal B^p(\mathcal H)$ and $T\in\mathcal B^q(\mathcal H)$ with $p,q\in [1,\infty]$ conjugate and let $(S_n)_{n\in\mathbb N}$ be a sequence in $\mathcal B(\mathcal H)$
which converges strongly to $S\in\mathcal B(\mathcal H)$. Then
\begin{align*}
\lim_{n\to\infty}\operatorname{tr}(CS_n^\dagger TS_n)=\operatorname{tr}(CS^\dagger TS)\,.
\end{align*}
Furthermore, the sequence of linear functionals $(\operatorname{tr}(CS_n^\dagger(\cdot)S_n))_{n\in\mathbb N}$ 
converges uniformly to $\operatorname{tr}(CS^\dagger (\cdot)S)$ on $\nu_q$-bounded subsets of 
$\mathcal B^q(\mathcal H)$, while the sequence $(\operatorname{tr}((\cdot)S_n^\dagger TS_n))_{n\in\mathbb N}$ 
converges uniformly to $\operatorname{tr}((\cdot)S^\dagger TS)$ on $\nu_p$-bounded subsets of 
$\mathcal B^p(\mathcal H)$.
\end{lemma}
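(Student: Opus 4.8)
The key idea is to reduce everything to Lemma \ref{lemma_proj_strong_conv} together with the Hölder-type estimate \eqref{eq:4}. First I would establish the pointwise convergence $\operatorname{tr}(CS_n^\dagger TS_n)\to\operatorname{tr}(CS^\dagger TS)$. Writing
\[
\operatorname{tr}(CS_n^\dagger TS_n)-\operatorname{tr}(CS^\dagger TS)
= \operatorname{tr}\big((CS_n^\dagger - CS^\dagger)TS_n\big) + \operatorname{tr}\big(CS^\dagger T(S_n - S)\big),
\]
I would estimate the first term, using cyclicity of the trace and \eqref{eq:4}, by $\nu_p(CS_n^\dagger - CS^\dagger)\,\nu_q(TS_n)$ (moving $S_n$ next to $T$), and the second by $\nu_q(T(S_n-S))\,\nu_p(S^\dagger C) \le \nu_q(TS_n - TS)\,\|S\|\,\nu_p(C)$. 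Lemma \ref{lemma_proj_strong_conv}, applied to the strongly convergent sequences $(S_n^\dagger)$ — note $S_n\to S$ strongly does \emph{not} imply $S_n^\dagger\to S^\dagger$ strongly in general, so I need to be careful here — forces these to vanish. Here lies the one genuine subtlety: Lemma \ref{lemma_proj_strong_conv} is stated for a strongly convergent sequence $(S_n)$, and it already packages $S_nC\to SC$, $CS_n^\dagger\to CS^\dagger$ and $S_nCS_n^\dagger\to SCS^\dagger$; so in fact I should simply invoke the third conclusion directly. Applying Lemma \ref{lemma_proj_strong_conv} with the operator $C$ there replaced by $T\in\mathcal B^q(\mathcal H)$ (legitimate since $q\in[1,\infty]$), we get $S_nTS_n^\dagger\to STS^\dagger$ in $\nu_q$; but the expression we want involves $S_n^\dagger T S_n$, which is $S_nTS_n^\dagger$ only after swapping $S_n\leftrightarrow S_n^\dagger$. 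Since $(S_n^\dagger)$ need not converge strongly, the clean fix is to keep the two-term splitting above and apply Lemma \ref{lemma_proj_strong_conv} to each factor separately: $CS_n^\dagger\to CS^\dagger$ in $\nu_p$ is exactly the second conclusion of that lemma, and $TS_n\to TS$, $\nu_q(TS_n)\le\|T\|_{\text{op}}\cdot(\text{bounded})$ — actually $\nu_q(TS_n)\le\nu_q(T)\|S_n\|\le\kappa\,\nu_q(T)$ by Lemma \ref{lemma_10}(a), with $\kappa$ a uniform bound on $\|S_n\|$ from the uniform boundedness principle. So both terms tend to $0$, giving pointwise convergence.

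For the uniform-convergence statements I would argue almost identically but keep the ``moving'' operator abstract. For a $\nu_q$-bounded set $\mathcal M\subseteq\mathcal B^q(\mathcal H)$ with $\nu_q(X)\le R$ for all $X\in\mathcal M$, write
\[
\operatorname{tr}(CS_n^\dagger X S_n)-\operatorname{tr}(CS^\dagger X S)
= \operatorname{tr}\big((CS_n^\dagger - CS^\dagger)XS_n\big) + \operatorname{tr}\big(CS^\dagger X(S_n - S)\big).
\]
Using cyclicity and \eqref{eq:4}, the first term is bounded in modulus by $\nu_p(CS_n^\dagger - CS^\dagger)\,\nu_q(XS_n)\le \nu_p(CS_n^\dagger - CS^\dagger)\,R\,\kappa$, which is independent of $X$ and tends to $0$ by Lemma \ref{lemma_proj_strong_conv}. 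The second term is bounded by $\nu_p(S^\dagger C)\,\nu_q(X(S_n-S))$; but $X(S_n - S)$ cannot be controlled uniformly in $X$ by strong convergence alone, so instead I bound it as $\nu_p(C)\,\|S\|_{\text{op}}\cdot\nu_q(X(S_n-S))$ — hmm, this still has $X$ inside. The correct regrouping is to instead pair $C$ with $(S_n - S)$: rewrite the second term via cyclicity as $\operatorname{tr}\big(X(S_n - S)CS^\dagger\big)$ and bound it by $\nu_q(X)\,\nu_p((S_n-S)CS^\dagger)\le R\,\|S\|_{\text{op}}\,\nu_p((S_n-S)C)$, and $\nu_p((S_n-S)C)=\nu_p(S_nC - SC)\to 0$, again by Lemma \ref{lemma_proj_strong_conv}. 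Hence the bound $R\kappa\,\nu_p(CS_n^\dagger - CS^\dagger) + R\|S\|_{\text{op}}\,\nu_p(S_nC - SC)$ is uniform over $\mathcal M$ and vanishes. The companion statement for $\nu_p$-bounded subsets of $\mathcal B^p(\mathcal H)$ follows by the symmetric computation, swapping the roles of $p,q$ and of $C,T$, and invoking Lemma \ref{lemma_proj_strong_conv} with $T$ in place of $C$.

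The main obstacle, then, is purely bookkeeping: in each of the two error terms one must be disciplined about \emph{which} operator the ``moving'' argument $X$ gets paired against under cyclicity of the trace, so that the factor left carrying $X$ is only ever multiplied by a \emph{fixed} bounded operator (contributing $\|S\|_{\text{op}}$ or $\kappa$) and never by the difference $S_n - S$, whose effect cannot be made small uniformly over an infinite-dimensional bounded set. Once the splitting is arranged so that every occurrence of $S_n - S$ sits next to the fixed operator $C$ (or $T$), each summand reduces to one of the three convergences supplied by Lemma \ref{lemma_proj_strong_conv}, and the uniform bound is immediate from \eqref{eq:4} and Lemma \ref{lemma_10}(a).
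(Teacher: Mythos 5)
Your treatment of the pointwise limit and of the first uniform-convergence clause is correct, though more laborious than necessary: the paper simply uses cyclicity once, writing $\operatorname{tr}(CS^\dagger TS)-\operatorname{tr}(CS_n^\dagger TS_n)=\operatorname{tr}\bigl((SCS^\dagger-S_nCS_n^\dagger)T\bigr)$, and bounds this by $\nu_p(SCS^\dagger-S_nCS_n^\dagger)\,\nu_q(T)$, which tends to $0$ by the third conclusion of Lemma \ref{lemma_proj_strong_conv} applied to $C$ itself; replacing $T$ by $X$ with $\nu_q(X)\le R$ gives the uniform statement on $\nu_q$-bounded sets in one line. Your two-term telescoping achieves the same thing, and your eventual regrouping (always placing $S_n-S$ to the \emph{left} of $C$, so that only $S_nC-SC$ and $CS_n^\dagger-CS^\dagger$ appear) is the right discipline; note, however, that your first-pass bound $\nu_q(TS_n-TS)\to 0$ in the pointwise paragraph is not among the conclusions of Lemma \ref{lemma_proj_strong_conv} and is false in general, so only your corrected regrouping is usable.

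The genuine gap is the last sentence: the second uniform clause does \emph{not} follow ``by the symmetric computation.'' The asymmetry you yourself flagged ($S_n\to S$ strongly does not give $S_n^\dagger\to S^\dagger$ strongly) is exactly what breaks the symmetry here. By duality, uniform convergence of $X\mapsto\operatorname{tr}(XS_n^\dagger TS_n)$ on the $\nu_p$-unit ball is equivalent to $\nu_q(S_n^\dagger TS_n-S^\dagger TS)\to 0$, i.e. to controlling $S_n^\dagger TS_n$ rather than $S_nTS_n^\dagger$ --- and Lemma \ref{lemma_proj_strong_conv} only supplies the latter. Indeed, for general strongly convergent $(S_n)$ this can fail: with $L$ the backward shift on $\ell^2(\mathbb N)$, $S_n:=L^n\to 0$ strongly, and $T:=\langle e_1,\cdot\rangle e_1$ one gets $S_n^\dagger TS_n=\langle e_{n+1},\cdot\rangle e_{n+1}$, so $\nu_q(S_n^\dagger TS_n-0)=1$ for all $n$, and testing against $X_n=\langle e_{n+1},\cdot\rangle e_{n+1}$ (with $\nu_p(X_n)=1$) shows the convergence is not uniform. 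So this clause cannot be obtained by your (or any) symmetric rearrangement; it requires an additional hypothesis such as $S_n^\dagger\to S^\dagger$ strongly (which does hold for the concrete $S_n$ used later in the paper). To be fair, the paper's own one-line proof is equally silent on this point; but your proposal affirmatively asserts a step that would fail, and you should either add the adjoint-convergence hypothesis or restrict the claim.
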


\begin{proof}
The statement is a simple consequence of (\ref{eq:4}) and Lemma \ref{lemma_proj_strong_conv} as
\begin{align*}
|\operatorname{tr}(CS^\dagger TS) &-\operatorname{tr}(CS_n^\dagger TS_n)| = 
|\operatorname{tr}((SCS^\dagger-S_nCS_n^\dagger)T)|\\
&\leq \nu_p(SCS^\dagger-S_nCS_n^\dagger)\nu_q(T) \to 0
\quad\text{as } n\to\infty\,.\qedhere
\end{align*}
\end{proof}

\begin{theorem}\label{lemma_2}
Let $C\in\mathcal B^p(\mathcal H)$, $T\in\mathcal B^q(\mathcal H)$ with $p,q\in [1,\infty]$ conjugate be given. Furthermore, let $(e_n)_{n\in\mathbb N},(g_n)_{n\in\mathbb N}$ be arbitrary orthonormal bases of $\mathcal H$. Then
\begin{align*}
\lim_{n\to\infty}W_{[C]^e_{2n}}([T]^g_{2n})=\overline{W_C(T)}
\end{align*}
where $[\,\cdot\,]_k^e$ and $[\,\cdot\,]_k^g$ are the maps given by \eqref{cut_out_operator} with respect to $(e_n)_{n\in\mathbb N}$ and $(g_n)_{n\in\mathbb N}$, respectively.
\end{theorem}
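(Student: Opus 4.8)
The plan is to reduce the asserted Hausdorff-metric convergence to the two one-sided estimates
$$\sup_{z\in W_{[C]^e_{2n}}([T]^g_{2n})}d\big(z,\overline{W_C(T)}\big)\to 0\qquad\text{and}\qquad\sup_{w\in\overline{W_C(T)}}d\big(w,W_{[C]^e_{2n}}([T]^g_{2n})\big)\to 0\,.$$
Since $W_C(T)$ and every $W_{[C]^e_{2n}}([T]^g_{2n})$ lie in the disc of radius $\nu_p(C)\nu_q(T)$ (Lemma \ref{lemma_nu_hoelder} combined with Lemma \ref{lemma_10}(a)), the sets on the left are compact (continuous images of the compact group of $2n\times 2n$ unitaries) and $\overline{W_C(T)}$ is compact, so these two estimates are precisely the statement that the Hausdorff distance between $W_{[C]^e_{2n}}([T]^g_{2n})$ and $\overline{W_C(T)}$ tends to $0$. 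Throughout, let $\Gamma^e_n,\Gamma^g_n$ denote the isometries from \eqref{cut_out_operator} attached to $(e_n)_{n\in\mathbb N}$ and $(g_n)_{n\in\mathbb N}$, and put $P^e_m:=\Gamma^e_m(\Gamma^e_m)^\dagger$, $P^g_m:=\Gamma^g_m(\Gamma^g_m)^\dagger$. As $I-P^e_m\to 0$ strongly, Lemma \ref{lemma_proj_strong_conv} shows that $\nu_p((I-P^e_m)C)\to 0$ and $\nu_p(C(I-P^e_m))\to 0$; we abbreviate $\eta_m:=\big(\nu_p((I-P^e_m)C)+2\,\nu_p(C(I-P^e_m))\big)\nu_q(T)$, so that $\eta_m\to 0$.

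The link between the matrix picture and $W_C(T)$ is the following. For a unitary $V\in\mathbb C^{2n\times 2n}$ set $W_V:=\Gamma^g_{2n}V(\Gamma^e_{2n})^\dagger\in\mathcal B(\mathcal H)$; this is a partial isometry with $W_V^\dagger W_V=P^e_{2n}$ and $W_VW_V^\dagger=P^g_{2n}$ (a ``block unitary'' which, because the two bases may differ, has distinct initial and final coordinate subspaces), and cyclicity of the trace gives the exact identity $\operatorname{tr}([C]^e_{2n}V^\dagger[T]^g_{2n}V)=\operatorname{tr}(CW_V^\dagger TW_V)$. Extend $W_V$ to a unitary $\hat U_V\in\mathcal B(\mathcal H)$ by any unitary between the orthocomplements $(P^e_{2n}\mathcal H)^\perp$ and $(P^g_{2n}\mathcal H)^\perp$ (both infinite-dimensional, hence isomorphic); then $\hat U_V=W_V+R_V$ with $R_V=(I-P^g_{2n})R_V(I-P^e_{2n})$, and expanding this product in $\operatorname{tr}(C\hat U_V^\dagger T\hat U_V)$ produces, apart from $\operatorname{tr}(CW_V^\dagger TW_V)$, three remainder terms, each carrying a factor $(I-P^e_{2n})C$ or $C(I-P^e_{2n})$; by \eqref{eq:4} and Lemma \ref{lemma_10}(a) each is bounded in modulus by $\nu_q(T)\,\nu_p((I-P^e_{2n})C)$ or $\nu_q(T)\,\nu_p(C(I-P^e_{2n}))$, so the three of them sum to at most $\eta_{2n}$. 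Hence
$$\big|\operatorname{tr}([C]^e_{2n}V^\dagger[T]^g_{2n}V)-\operatorname{tr}(C\hat U_V^\dagger T\hat U_V)\big|\le\eta_{2n}\qquad\text{for every unitary }V\in\mathbb C^{2n\times 2n}\,.$$
As $\operatorname{tr}(C\hat U_V^\dagger T\hat U_V)\in W_C(T)$, this establishes the first estimate with the uniform bound $\eta_{2n}$.

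For the second estimate, fix $w=\operatorname{tr}(CU^\dagger TU)\in W_C(T)$ with $U\in\mathcal B(\mathcal H)$ unitary. The $n\times n$ matrix $A_n:=(\Gamma^g_n)^\dagger U\Gamma^e_n$ is a contraction, so it admits a unitary dilation $V_n\in\mathbb C^{2n\times 2n}$, namely the unitary with $n\times n$ blocks $A_n$, $(I_n-A_nA_n^\dagger)^{1/2}$, $(I_n-A_n^\dagger A_n)^{1/2}$ and $-A_n^\dagger$; this is precisely the step that consumes the room encoded in ``$2n$''. I claim that $W_{V_n}\to U$ strongly. Indeed, for $x\in\mathcal H$ the component of $W_{V_n}x$ in the span of the first $n$ vectors of $(g_n)$ equals $P^g_nUP^e_nx$ up to an error of norm $\le\|(P^e_{2n}-P^e_n)x\|$, hence converges to $Ux$ because $P^g_nUP^e_nx\to Ux$ and $\|(P^e_{2n}-P^e_n)x\|\to 0$; the orthogonal remainder of $W_{V_n}x$ has norm at most $\big(\|P^e_nx\|^2-\|P^g_nUP^e_nx\|^2\big)^{1/2}+\|(P^e_{2n}-P^e_n)x\|$, which tends to $0$ since $\|P^e_nx\|\to\|x\|$ and $\|P^g_nUP^e_nx\|\to\|Ux\|=\|x\|$. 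Lemma \ref{strong_tr_conv} then gives $\operatorname{tr}([C]^e_{2n}V_n^\dagger[T]^g_{2n}V_n)=\operatorname{tr}(CW_{V_n}^\dagger TW_{V_n})\to w$, so $d\big(w,W_{[C]^e_{2n}}([T]^g_{2n})\big)\to 0$ for each fixed $w\in W_C(T)$. A routine compactness argument — cover the compact set $\overline{W_C(T)}$ by finitely many small balls centred at points of the dense subset $W_C(T)$ and apply the pointwise statement to those finitely many centres — upgrades this to the uniform second estimate.

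The remainder-term bookkeeping and the cyclic trace identity are routine; the substantive difficulty is the second estimate, i.e.\ manufacturing finite-dimensional unitaries $V_n$ whose inflations $W_{V_n}$ converge strongly to a prescribed $U$. The mechanism that makes this work — and the reason the factor $2$ enters the theorem — is the passage from the $n\times n$ corner $(\Gamma^g_n)^\dagger U\Gamma^e_n$ of $U$ to its $2n\times 2n$ unitary dilation, and the one genuinely delicate point is the vanishing of the dilation's defect, $\|P^e_nx\|^2-\|P^g_nUP^e_nx\|^2\to 0$.
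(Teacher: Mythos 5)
Your proof is correct and, as far as one can tell, follows the same route as the argument the paper defers to (the proof of Thm.~3.1 in \cite{dirr_ve}): the exact trace identity for the partial isometries $\Gamma^g_{2n}V(\Gamma^e_{2n})^\dagger$, their unitary extension for the ``$\subseteq$'' direction, and the $2n\times 2n$ unitary dilation of the compressed corner of $U$ combined with Lemmas \ref{lemma_proj_strong_conv} and \ref{strong_tr_conv} for the ``$\supseteq$'' direction. Since the paper itself only cites that earlier proof and notes the $\nu_p$-adjustments, your write-up is in effect a complete and accurate reconstruction of it.
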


\begin{proof}
The proof for $p=1$ and $q=\infty$ (or vice versa) given in \cite[Thm.~3.1]{dirr_ve} can be adjusted
to the case $p,q\in(1,\infty)$ by minimal modifications.
\end{proof}

Before proceeding with the star-shapedness of $\overline{W_C(T)}$, we need the following auxilliary
result to characterize the star-center later on.

\begin{lemma}\label{lemma_0_conv}
Let $C\in\mathcal B^p(\mathcal H)$ with $p\in( 1,\infty]$ and let $q\in[1,\infty)$ such that $p,q$ are conjugate. Furthermore, let $(e_n)_{n\in\mathbb N}$ be any orthonormal system in
$\mathcal H$. Then
\begin{align*}
\lim_{n\to\infty}\frac{1}{n^{1/q}}\sum_{k=1}^n\langle e_k,Ce_k\rangle=0\,.
\end{align*}
\end{lemma}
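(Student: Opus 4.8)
The statement says that the Cesàro-type averages $\frac{1}{n^{1/q}}\sum_{k=1}^n\langle e_k,Ce_k\rangle$ vanish. The plan is to estimate this sum by Hölder's inequality for finite sequences together with the majorization bound from Lemma~\ref{lemma_2b}(a). Writing $a_k:=\langle e_k,Ce_k\rangle$, split the exponent as $1=\frac1p+\frac1q$ and apply the discrete Hölder inequality to the trivial factorization $|a_k|=|a_k|\cdot 1$:
\begin{align*}
\sum_{k=1}^n|a_k|\leq\Big(\sum_{k=1}^n|a_k|^p\Big)^{1/p}\Big(\sum_{k=1}^n 1\Big)^{1/q}=n^{1/q}\Big(\sum_{k=1}^n|a_k|^p\Big)^{1/p}\,.
\end{align*}
Hence $\frac{1}{n^{1/q}}\big|\sum_{k=1}^n a_k\big|\leq\big(\sum_{k=1}^n|a_k|^p\big)^{1/p}\leq\big(\sum_{k=1}^\infty|a_k|^p\big)^{1/p}$.

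**Key step.** It remains to show $\sum_{k=1}^\infty|\langle e_k,Ce_k\rangle|^p<\infty$, indeed that the tail of this series tends to $0$, which forces the averages to $0$. Actually one has to be a little more careful: the bound above only gives that the averages are \emph{bounded} by the full $\ell^p$-sum, not that they vanish. To get vanishing, apply the same Hölder step only to a tail: for fixed $N<n$,
\begin{align*}
\frac{1}{n^{1/q}}\Big|\sum_{k=1}^n a_k\Big|\leq\frac{1}{n^{1/q}}\sum_{k=1}^N|a_k|+\frac{1}{n^{1/q}}\sum_{k=N+1}^n|a_k|\leq\frac{1}{n^{1/q}}\sum_{k=1}^N|a_k|+\Big(\sum_{k=N+1}^\infty|a_k|^p\Big)^{1/p}\,,
\end{align*}
where the last inequality uses Hölder on the block $k=N+1,\dots,n$ (whose length is at most $n$). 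Given $\varepsilon>0$, first choose $N$ so that the tail $\big(\sum_{k>N}|a_k|^p\big)^{1/p}<\varepsilon/2$, then choose $n$ large enough that the first (finite) term is $<\varepsilon/2$; this is possible since $1/q>0$ so $n^{1/q}\to\infty$. For $p=\infty$ the argument is even simpler since $\frac1q=1$ and $|a_k|\to 0$ by Lemma~\ref{lemma_2b}(b), so $\frac1n\sum_{k=1}^n a_k\to 0$ by the standard Cesàro-means argument.

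**Main obstacle.** The one thing that genuinely needs proof is the summability $\sum_k|\langle e_k,Ce_k\rangle|^p<\infty$ for $p\in(1,\infty)$. For $p=1$ this is exactly Lemma~\ref{lemma_2b}(a) (bounded by $\nu_1(C)<\infty$); for general $p$ it follows from the majorization $\sum_{k=1}^n|\langle e_k,Ce_k\rangle|\le\sum_{k=1}^n s_k(C)$ of Lemma~\ref{lemma_2b}(a) together with the fact that the sequence $(|\langle e_k,Ce_k\rangle|)_k$ is dominated in the Hardy--Littlewood--Pólya majorization sense by $(s_k(C))_k$, hence $\sum_k|\langle e_k,Ce_k\rangle|^p\le\sum_k s_k(C)^p=\nu_p(C)^p<\infty$ because $t\mapsto t^p$ is convex and increasing on $[0,\infty)$. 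I expect this majorization-to-$\ell^p$ inference (via an Abel summation / rearrangement argument on the partial sums) to be the only nonroutine point; everything else is the elementary $\varepsilon$-splitting above. Alternatively, one can sidestep majorization entirely by noting $|\langle e_k,Ce_k\rangle|\le\langle e_k,|C|e_k\rangle$ only when $C\ge 0$, so the clean route is really the majorization argument, which I would invoke as a standard fact.
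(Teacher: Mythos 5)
Your argument is correct, but it routes through a stronger intermediate fact than the paper needs. The paper's proof performs the same $\varepsilon$-splitting, but it first applies Lemma~\ref{lemma_2b}(a) to replace $\sum_{k=1}^n|\langle e_k,Ce_k\rangle|$ by $\sum_{k=1}^n s_k(C)$ and only then splits at $N_1$ and applies H\"older to the \emph{singular-value} tail $\sum_{k=N_1+1}^n s_k(C)\le\big(\sum_{k=N_1+1}^n s_k(C)^p\big)^{1/p}(n-N_1)^{1/q}$; there the required $p$-summability is just the definition of $\mathcal B^p(\mathcal H)$, so nothing beyond Lemma~\ref{lemma_2b}(a) and H\"older is needed. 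You instead split first and apply H\"older to the diagonal entries $a_k=\langle e_k,Ce_k\rangle$ themselves, which forces you to establish $\sum_k|a_k|^p\le\nu_p(C)^p$. That inference is true, but it is genuinely the Tomi\'c--Weyl (Hardy--Littlewood--P\'olya) theorem on weak majorization, and it needs one extra observation you leave implicit: Lemma~\ref{lemma_2b}(a) bounds the partial sums of $(|a_k|)_k$ \emph{in the given order}, whereas weak majorization requires the partial sums of the decreasing rearrangement to be dominated by $\sum_{k=1}^n s_k(C)$; this does follow, because the lemma applies to every orthonormal system and hence to every finite subcollection of the $e_k$, but it should be said. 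So your proof is valid modulo citing that standard majorization result; the paper's ordering of the two steps (majorize first, then H\"older) makes the whole argument self-contained and avoids it. Your $p=\infty$ case coincides with the paper's.
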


\begin{proof}
First, let $p=\infty$, so $q=1$. As $C$ is compact, by Lemma \ref{lemma_2b} (b), one has
$\lim_{k\to\infty}\langle e_k,Ce_k\rangle=0$ and thus the sequence of arithmetic means converges
to zero as well. Next, let $p\in(1,\infty)$ and $\varepsilon>0$. Moreover, we assume w.l.o.g.~$C\neq 0$
so $s_1(C) = \Vert C\Vert\neq 0$. As $C\in \mathcal B^p(\mathcal H)$, one can choose $N_1\in\mathbb N$
such that
\begin{align*}
\sum_{k=N_1+1}^\infty s_k(C)^p<\frac{\varepsilon^p}{2^p}
\end{align*}
and moreover $N_2\in\mathbb N$ such that
\begin{align*}
\frac{1}{n^{1/q}}<\frac{\varepsilon}{2\sum_{k=1}^{N_1}s_k(C)}
\end{align*}
for all $n\geq N_2$. Then, for any $n\geq N:=\max\lbrace N_1+1,N_2\rbrace$, by Lemma \ref{lemma_2b}
and H\"older's inequality we obtain
\begin{align*}
\Big|\frac{1}{n^{1/q}}\sum_{k=1}^n\langle e_k,Ce_k\rangle\Big|&\leq \frac{1}{n^{1/q}}\sum_{k=1}^{N_1}s_k(C)+\frac{1}{n^{1/q}}\sum_{k=N_1+1}^{n}s_k(C)\\
&\leq \frac{1}{n^{1/q}}\sum_{k=1}^{N_1}s_k(C)+\Big(\sum_{k=N_1+1}^{n} s_k(C)^p \Big)^{1/p}\Big(\sum_{k=N_1+1}^{n} \frac{1}{n} \Big)^{1/q}\\
&<\frac{\varepsilon}{2}+\Big(\sum_{k=N_1+1}^{\infty} s_k(C)^p \Big)^{1/p}\Big(\underbrace{ \frac{n-N_1}{n} }_{\leq 1}\Big)^{1/q}\leq\varepsilon\,.
\end{align*}
This concludes the proof.
\end{proof}

Now, our main result of this section reads as follows.

\begin{theorem}\label{theorem_1}
Let $C\in\mathcal B^p(\mathcal H)$ and $T\in\mathcal B^q(\mathcal H)$ with $p,q\in [1,\infty]$ conjugate. Then $\overline{W_C(T)}$ is star-shaped with respect to the origin.
\end{theorem}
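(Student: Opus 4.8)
The plan is to reduce the infinite-dimensional statement to the known matrix case via the approximation result of Theorem \ref{lemma_2}. Recall that for $C$-numerical ranges of matrices it is a classical fact (Cheung--Tsing) that $W_{C'}(T')$ is star-shaped with respect to the point $\frac{1}{n}\operatorname{tr}(C')\operatorname{tr}(T')$ for $C',T'\in\mathbb C^{n\times n}$. So first I would fix arbitrary orthonormal bases $(e_n)_{n\in\mathbb N}$ and $(g_n)_{n\in\mathbb N}$ of $\mathcal H$ and set $C_n:=[C]^e_{2n}\in\mathbb C^{2n\times 2n}$ and $T_n:=[T]^g_{2n}\in\mathbb C^{2n\times 2n}$. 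By the matrix result, $W_{C_n}(T_n)$ is star-shaped with respect to
\begin{align*}
z_n:=\frac{1}{2n}\operatorname{tr}(C_n)\operatorname{tr}(T_n)=\Big(\frac{1}{(2n)^{1/q}}\sum_{k=1}^{2n}\langle e_k,Ce_k\rangle\Big)\Big(\frac{1}{(2n)^{1/p}}\sum_{k=1}^{2n}\langle g_k,Tg_k\rangle\Big)\,.
\end{align*}

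Next I would show $z_n\to 0$. Here the cases split according to whether $p$ or $q$ equals $1$. If $q\in(1,\infty)$ (so $p\in(1,\infty)$ as well, or we are in the Hilbert--Schmidt case), then Lemma \ref{lemma_0_conv} applies directly to \emph{both} factors: the $C$-factor is $(2n)^{-1/q}\sum_{k=1}^{2n}\langle e_k,Ce_k\rangle\to 0$ and the $T$-factor is $(2n)^{-1/p}\sum_{k=1}^{2n}\langle g_k,Tg_k\rangle\to 0$, so their product tends to $0$ (and in fact each factor is bounded, which is all we need). If instead $q=1$ and $p=\infty$, then $C$ is merely compact: now Lemma \ref{lemma_0_conv} (the $p=\infty$ case) gives that the $C$-factor $(2n)^{-1}\sum_{k=1}^{2n}\langle e_k,Ce_k\rangle\to 0$, while the $T$-factor $(2n)^{-1/p}\sum_{k=1}^{2n}\langle g_k,Tg_k\rangle=\sum_{k=1}^{2n}\langle g_k,Tg_k\rangle=\operatorname{tr}([T]^g_{2n})$ need not converge, but it is bounded: by Lemma \ref{lemma_2b}(a) applied to the trace-class operator $T$ one has $|\operatorname{tr}([T]^g_{2n})|\le\sum_{k=1}^{2n}|\langle g_k,Tg_k\rangle|\le\sum_{k=1}^{2n}s_k(T)\le\nu_1(T)$. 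So the product again tends to $0$. The symmetric case $p=1$, $q=\infty$ is handled the same way with the roles of $C$ and $T$ swapped. In all cases $z_n\to 0$.

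Finally I would invoke the set-convergence machinery. By Theorem \ref{lemma_2}, $W_{C_n}(T_n)\to\overline{W_C(T)}$ in the Hausdorff metric; these are bounded sequences of non-empty compact sets (boundedness follows from the Hölder-type bound \eqref{eq:4} together with Lemma \ref{lemma_10}(a), which gives $\nu_p(C_n)\le\nu_p(C)$ and $\nu_q(T_n)\le\nu_q(T)$, so $W_{C_n}(T_n)$ lies in the disc of radius $\nu_p(C)\nu_q(T)$). Since each $W_{C_n}(T_n)$ is star-shaped with respect to $z_n$ and $z_n\to 0$, Lemma \ref{lemma_5}(d) yields that the limit set $\overline{W_C(T)}$ is star-shaped with respect to $0$, which is the claim.

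\textbf{Main obstacle.} The only genuinely delicate point is the convergence $z_n\to 0$ of the star-centers, and within it the asymmetry between the two conjugate exponents: when one of $p,q$ is $1$ the corresponding normalization exponent is $0$, so that factor is not shown to vanish but only to stay bounded (via Lemma \ref{lemma_2b}(a)), and one must check that boundedness of one factor times vanishing of the other still suffices — which it does. Everything else is a direct citation of the matrix case and of Lemmas \ref{lemma_5} and \ref{lemma_2}.
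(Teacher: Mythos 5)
Your proof is correct and follows essentially the same route as the paper: approximate by $2n\times 2n$ compressions, invoke the Cheung--Tsing star-center $\operatorname{tr}(C_n)\operatorname{tr}(T_n)/(2n)$, make at least one factor vanish via Lemma \ref{lemma_0_conv}, and pass to the limit with Theorem \ref{lemma_2} and Lemma \ref{lemma_5}(d). Your case analysis for $z_n\to 0$ is in fact more explicit than the paper's one-line remark; the only slip --- claiming the $T$-factor ``need not converge'' when $q=1$, whereas it converges to $\operatorname{tr}(T)$ since $(g_n)_{n\in\mathbb N}$ is an orthonormal basis --- is harmless, as boundedness is all you use.
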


\begin{proof}
Let $(e_n)_{n\in\mathbb N},(g_n)_{n\in\mathbb N}$ be arbitrary orthonormal bases of $\mathcal H$. For $n\in\mathbb N$, it is readily verified that
\begin{align*}
\frac{\operatorname{tr}([C]^e_{2n})\operatorname{tr}([T]^g_{2n})}{2n} &=\frac{\operatorname{tr}([C]^e_{2n})}{(2n)^{1/q}}\frac{\operatorname{tr}([T]^g_{2n})}{(2n)^{1/p}}\\
&=\Big( \frac{1}{(2n)^{1/q}}\sum_{j=1}^{2n} \langle e_j,Ce_j\rangle \Big)\Big( \frac{1}{(2n)^{1/p}}\sum_{j=1}^{2n} \langle g_j,Tg_j\rangle \Big)\,.
\end{align*}
Both factors converge and, by Lemma \ref{lemma_0_conv}, at least one of them goes to $0$ as $n\to\infty$. 
Moreover, $W_{[C]^e_{2n}}([T]^g_{2n})$ is star-shaped with respect to 
$(\operatorname{tr}([C]^e_{2n})\operatorname{tr}([T]^g_{2n})/(2n)$ for all $n\in\mathbb N$,
cf.~\cite[Thm.~4]{article_cheungtsing}. Thus Lemma \ref{lemma_5} (d) and Theorem \ref{lemma_2}
imply that $\overline{W_{C}(T)}$ is star-shaped with respect to $0 \in \mathbb{C}$, i.e. with respect
to the origin.
\end{proof}

\begin{remark}
The limit case $p=1$ and $q=\infty$ returns the known star-shapedness result in the case of trace-class
\cite[Thm.~3.3]{dirr_ve} because the essential numerical range satisfies $W_e(T)=\lbrace0\rbrace$ if (and only if) $T$ is compact \cite[Thm.~34.2]{bonsallduncan}.
\end{remark}

In analogy to the essential numerical range of a bounded linear operator as characterized in, e.g., \cite[Thm.~34.9]{bonsallduncan}, we introduce
the \emph{essential range} of a bounded linear functional $L\in(\mathcal B^q(\mathcal H))'$ via
\begin{align*}
W_e(L) :=  \Big\lbrace \lim_{n \to \infty} L(\langle f_n,\cdot \rangle f_n)\,\Big|\, (f_n)_{n \in \mathbb N}\;\text{ ONS of }\mathcal H\Big\rbrace \subset \mathbb C\,.
\end{align*}
By the canonical isomorphism $A \mapsto \operatorname{tr}(A\,\cdot\,)$ one has
$(\mathcal B^1(\mathcal H))' \simeq \mathcal B(\mathcal H)$ and 
$(\mathcal B^q(\mathcal H))' \simeq \mathcal B^p(\mathcal H)$ for $q \in (1,\infty]$ with
$p,q$ conjugate, refer to \cite[Thm.~V.15]{Schatten} and \cite[Prop.~16.26]{MeiseVogt}. Thus for $q \in [1,\infty]$, to each
$L\in(\mathcal B^q(\mathcal H))'$ we can associate a unique bounded linear operator $C \in \mathcal B(\mathcal H)$
if $q = 1$ and $C \in \mathcal B^p(\mathcal H)$ if $q \in (1,\infty]$, such that
\begin{equation}\label{eq:ess_range}
W_e(L) = W_e(C)\,.
\end{equation}
This shows that $W_e(L)$ is non-empty, compact and convex and, in particular, $W_e(L)= \{0\}$ 
for $q \in (1,\infty]$, cf.~\cite[Thm.~34.2]{bonsallduncan}. With the above terminology one has the following straightforward
conclusion.

\begin{corollary}
\begin{itemize}
\item[(a)]
Let $q\in (1,\infty]$ and $T\in\mathcal B^q(\mathcal H)$ be given. The closure of the image of the unitary
orbit of $T$ under any bounded linear functional $L\in(\mathcal B^q(\mathcal H))'$, i.e. the closure of
$$
L(\mathcal{O}_U(T)):=\lbrace L(U^\dagger TU)\,|\,U\in\mathcal B(\mathcal H)\text{ unitary}\rbrace\,,
$$
is star-shaped with 
respect to the origin.\vspace{4pt}
\item[(b)]
Let $q = 1$ and $T\in\mathcal B^1(\mathcal H)$ be given. The closure of the image of the unitary orbit
of $T$ under any bounded linear functional $L\in(\mathcal B^1(\mathcal H))'$ is star-shaped with 
respect to $\operatorname{tr}(T)W_e(L)$, i.e. all $z \in \operatorname{tr}(T)W_e(L)$ are possible
star centers.
\end{itemize}
\end{corollary}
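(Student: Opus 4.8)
The plan is to derive this corollary directly from Theorem~\ref{theorem_1} together with the identification \eqref{eq:ess_range} of linear functionals with operators. First I would fix $q\in[1,\infty]$ and an arbitrary $L\in(\mathcal B^q(\mathcal H))'$; by the canonical isomorphism recalled above there is a unique $C\in\mathcal B(\mathcal H)$ (if $q=1$) or $C\in\mathcal B^p(\mathcal H)$ (if $q\in(1,\infty]$) with $L=\operatorname{tr}(C\,\cdot\,)$. Then for every unitary $U\in\mathcal B(\mathcal H)$ one has $L(U^\dagger TU)=\operatorname{tr}(CU^\dagger TU)$, so $L(\mathcal O_U(T))=W_C(T)$ as sets, and hence $\overline{L(\mathcal O_U(T))}=\overline{W_C(T)}$. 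In both parts the task therefore reduces to identifying the star center of $\overline{W_C(T)}$ produced in the proof of Theorem~\ref{theorem_1}.

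For part (a), where $q\in(1,\infty]$, both $C$ and $T$ lie in genuine Schatten classes with $p,q$ conjugate and $q$ finite (note $q\in(1,\infty)$) — or the symmetric case — so Theorem~\ref{theorem_1} applies verbatim and gives that $\overline{W_C(T)}=\overline{L(\mathcal O_U(T))}$ is star-shaped with respect to $0$. This is immediate and requires no further work beyond the translation above; one may also remark that it is consistent with $W_e(L)=\{0\}$ for $q\in(1,\infty]$.

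For part (b), where $q=1$, the operator $C$ associated to $L$ is merely bounded, and $T\in\mathcal B^1(\mathcal H)$, so $(C,T)$ is a trace-class pair in the sense of \cite{dirr_ve}; what I need is the refined star-shapedness statement that identifies \emph{all} of $\operatorname{tr}(T)W_e(C)$ as admissible star centers. I would re-run the finite-dimensional approximation argument of the proof of Theorem~\ref{theorem_1}: for orthonormal bases $(e_n),(g_n)$ of $\mathcal H$, the matricial range $W_{[C]^e_{2n}}([T]^g_{2n})$ is star-shaped with respect to $\operatorname{tr}([C]^e_{2n})\operatorname{tr}([T]^g_{2n})/(2n)$ by \cite[Thm.~4]{article_cheungtsing}, and this equals $\bigl(\frac{1}{2n}\sum_{j=1}^{2n}\langle e_j,Ce_j\rangle\bigr)\bigl(\sum_{j=1}^{2n}\langle g_j,Tg_j\rangle\bigr)$; since $T$ is trace class the second factor converges to $\operatorname{tr}(T)$, while the first factor is the Cesàro mean of the diagonal of $C$. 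Choosing the orthonormal basis $(e_n)$ so that the Cesàro means $\frac1{2n}\sum_{j=1}^{2n}\langle e_j,Ce_j\rangle$ converge to a prescribed $\mu\in W_e(C)$ — which is exactly what membership in the essential numerical range allows, since $W_e(C)$ equals the set of such limits (and moreover is convex and closed under passing to Cesàro means of diagonal sequences) — and then applying Lemma~\ref{lemma_5}(d) together with Theorem~\ref{lemma_2}, I conclude that $\overline{W_C(T)}$ is star-shaped with respect to $\mu\operatorname{tr}(T)$; letting $\mu$ range over $W_e(C)=W_e(L)$ yields the claim.

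The main obstacle is the last step: producing, for each target $\mu\in W_e(L)=W_e(C)$, an orthonormal basis of $\mathcal H$ whose diagonal Cesàro means converge to $\mu$. Membership $\mu\in W_e(C)$ a priori only gives an orthonormal \emph{system} $(f_n)$ with $\langle f_n,Cf_n\rangle\to\mu$, along which the Cesàro means also converge to $\mu$; one then has to extend $(f_n)$ to a full orthonormal basis without destroying the limit of the Cesàro means of the \emph{even-index truncations}, which is where interleaving the complementary basis vectors sparsely (or invoking that $W_e(C)$ is unchanged under the diagonal-compression construction used to define the matricial approximants) is needed. Once this bookkeeping is handled, everything else is a direct quotation of the already-established Theorem~\ref{theorem_1}, Theorem~\ref{lemma_2}, and Lemma~\ref{lemma_5}(d).
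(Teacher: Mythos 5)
Part (a) of your proposal is correct and coincides with the paper's argument: identify $L$ with a unique $C\in\mathcal B^p(\mathcal H)$ via $A\mapsto\operatorname{tr}(A\,\cdot\,)$, observe $L(\mathcal O_U(T))=W_C(T)$, and quote Theorem~\ref{theorem_1}.

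For part (b) you diverge from the paper, and your route has two problems. First, a concrete error: you invoke Theorem~\ref{lemma_2} to get $\lim_n W_{[C]^e_{2n}}([T]^g_{2n})=\overline{W_C(T)}$, but for $q=1$ the operator $C$ representing $L\in(\mathcal B^1(\mathcal H))'$ is an arbitrary \emph{bounded} operator, whereas Theorem~\ref{lemma_2} (like Theorem~\ref{theorem_1}) requires $C\in\mathcal B^p(\mathcal H)$ with $p=\infty$, i.e.\ $C$ \emph{compact} in this paper's convention. For compact $C$ one has $W_e(C)=\{0\}$ and statement (b) is vacuous beyond star-shapedness about $0$; the entire content of (b) lives in the non-compact case, which none of this paper's theorems cover. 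The approximation result you need is the trace-class version \cite[Thm.~3.1]{dirr_ve}, applied after rewriting $\operatorname{tr}(CU^\dagger TU)=\operatorname{tr}(TUCU^\dagger)$ so that the trace-class operator $T$ sits in the subscript, i.e.\ $L(\mathcal O_U(T))=W_T(C)$ in the sense of \cite{dirr_ve}. Second, even granting that, your argument reduces to re-proving \cite[Thm.~3.3]{dirr_ve} from scratch, and the step you yourself flag as ``the main obstacle'' --- producing, for each $\mu\in W_e(C)$, a full orthonormal basis whose diagonal Ces\`aro means converge to $\mu$ --- is left as a sketch. The sparse-interleaving idea you mention does work (complete the orthonormal system $(f_n)$ with $\langle f_n,Cf_n\rangle\to\mu$ by inserting the complementary basis vectors at density-zero positions, so their contribution to the Ces\`aro mean is $O(\Vert C\Vert\log N/N)$), but as written the proof is incomplete at precisely the point that carries all the difficulty. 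The paper's proof of (b) is a one-line reduction: $L(\mathcal O_U(T))=W_T(B)$ for some bounded $B$, and \cite[Thm.~3.3]{dirr_ve} already states that $\overline{W_T(B)}$ is star-shaped with respect to every point of $\operatorname{tr}(T)W_e(B)$. You should either cite that theorem or fully execute the basis construction; at present part (b) has a genuine gap and a citation that does not apply.
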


\begin{proof}
(a) Let $q\in(1,\infty]$ with conjugate $p\in [1,\infty)$. Then, as seen above, 
$\mathcal B^p(\mathcal H) \simeq (\mathcal B^q(\mathcal H))'$ by means of the canonical map
$A\mapsto \operatorname{tr}(A\,\cdot\,)$. Now, $L(\mathcal{O}_U(T))=W_C(T)$
for some unique $C\in \mathcal B^p(\mathcal H)$ and thus, by Theorem \ref{theorem_1}, the closure of this
set is star-shaped with respect to $0 \in \mathbb C$.

\medskip
\noindent
(b) For $q=1$, again as seen above one has $(\mathcal B^1(\mathcal H))' \simeq \mathcal B(\mathcal H)$
and thus $L=\operatorname{tr}(B\,\cdot\,)$ for some $B\in\mathcal B(\mathcal H)$.
Hence, $L(\mathcal{O}_U(T))$ equals $W_T(B)$, cf.~\cite[Defi.~3.1]{dirr_ve}, and therefore is star-shaped with respect
to $\operatorname{tr}(T)W_e(B)=\operatorname{tr}(T)W_e(L)$, refer to \eqref{eq:ess_range} and \cite[Thm.~3.3]{dirr_ve}.
\end{proof}
\subsection{Convexity and the $C$-Spectrum}\label{sect_C_spectrum}
Convexity is definitely one of the most beautiful properties in the context of numerical ranges. A useful tool in order to characterize convexity of the $C$-numerical range is the $C$-spectrum, which was first introduced for matrices in \cite{article_marcus} and was generalized to infinite dimensions (more precisely, to trace-class operators) in \cite{dirr_ve}. Consequently, the next step is to transfer this concept and some of the known results to the Schatten-class setting.\medskip

In order to define the $C$-spectrum, we first have to fix the term \emph{eigenvalue sequence} of a 
compact operator $T \in \mathcal K(\mathcal H)$. In general, it is obtained by arranging the 
(necessarily countably many) non-zero eigenvalues in decreasing order with respect to their
absolute values and each eigenvalue is repeated as many times as its algebraic multiplicity\footnote{By
\cite[Prop.~15.12]{MeiseVogt}, every non-zero element $\lambda \in \sigma(T)$ of the spectrum
of $T$ is an eigenvalue of $T$ and has a well-defined finite algebraic multiplicity $\nu_a(\lambda)$,
e.g., $\nu_a(\lambda) := \dim \ker (T - \lambda I)^{n_0}$,  where $n_0 \in \mathbb N$ is the smallest 
natural number $n \in \mathbb N$ such that $\ker (T - \lambda I)^n = \ker (T - \lambda I)^{n+1}$.
\label{footnote_alg_mult}}. If only finitely many non-vanishing eigenvalues exist, the sequence is filled
up with zeros, see \cite[Ch.~15]{MeiseVogt}. For our purposes, we have to pass to a slightly 
\emph{modified eigenvalue sequence} as follows: 

\begin{itemize}
\item 
If the range of $T$ is infinite-dimensional and the kernel of $T$ is finite-dimensional,
then put $\operatorname{dim}(\operatorname{ker}T)$ zeros at the beginning of the eigenvalue
sequence of $T$. \vspace{4pt}
\item 
If the range and the kernel of $T$ are infinite-dimensional, mix infinitely many zeros into the
eigenvalue sequence of $T$.\footnote{Since in Definition \ref{defi_3} arbitrary permutations
will be applied to the modified eigenvalue sequence, we do not need to specify this mixing 
procedure further, cf. also \cite[Lemma 3.6]{dirr_ve}.}\vspace{4pt}
\item
If the range of $T$ is finite-dimensional leave the eigenvalue sequence of $T$ unchanged. 
\end{itemize}

Note that compact normal operators have a spectral decomposition of the form
\begin{align*}
T = \sum_{n=1}^\infty \tau_n \langle f_n, \cdot \rangle f_n
\end{align*}
where $(f_n)_{n \in \mathbb N}$ is an orthonormal basis of $\mathcal H$ and $(\tau_n)_{n \in \mathbb N}$
denotes the modified eigenvalue sequence of $T$, cf.~\cite[Thm.~VIII.§4.6]{berberian1976}. Hence it is
evident that for arbitrary $p\in[1,\infty)$, the absolute values of the non-vanishing eigenvalues and
the singular values of a \textit{normal} $T\in\mathcal B^p(\mathcal H)$ coincide and thus
\begin{align*}
\nu_p(T)=\Big(\sum_{n=1}^\infty |\tau_n|^p\Big)^{1/p}<\infty\,.
\end{align*}

\begin{definition}[$C$-spectrum]\label{defi_3}
Let $p,q\in[1,\infty]$ be conjugate. Then, for $C\in\mathcal B^p(\mathcal H)$ with modified
eigenvalue sequence $(\gamma_n)_{n\in\mathbb N}$ and $T\in\mathcal B^q(\mathcal H)$ with modified eigenvalue
sequence $(\tau_n)_{n\in\mathbb N}$, we define the $C$-\emph{spectrum} of $T$ to be
\begin{align*}
P_C(T):=\Big\lbrace \sum\nolimits_{n=1}^\infty \gamma_n\tau_{\sigma(n)} \,\Big|\, \sigma:\mathbb N \to\mathbb N \text{ is permutation}\Big\rbrace.
\end{align*}
\end{definition}

\noindent 
Due to H\"older's inequality and the standard estimate
$\sum_{n=1}^\infty  |\gamma_n(A)|^p \leq \sum_{n=1}^\infty s_n(A)^p$, cf.~\cite[Prop.~16.31]{MeiseVogt}, one has
$$
\sum_{n=1}^\infty |\gamma_n\tau_{\sigma(n)}|\leq \Big(\sum_{n=1}^\infty  s_n(C)^p \Big)^{1/p}\Big(\sum_{n=1}^\infty   s_n(T)^q\Big)^{1/q}=\nu_p(C)\nu_q(T)\,.
$$
Thus, the series $\sum\nolimits_{n=1}^\infty \gamma_n\tau_{\sigma(n)}$ in the definition of $P_C(T)$ are
well-defined and bounded by $\nu_p(C)\nu_q(T)$. \medskip

A comprehensive survey on basic results regarding the $C$-spectrum of a matrix can be found in
\cite[Ch.~6]{article_li_radii}. Below, in Theorem \ref{theorem_3}, we generalize some well-known
inclusion relations between the $C$-numerical range and the $C$-spectrum of matrices to Schatten-class
operators. Prior to this, however, we have to derive an approximation result similar to
Theorem \ref{lemma_2}.

\begin{theorem}\label{lemma_6}
Let $C\in\mathcal B^p(\mathcal H)$ and $T\in\mathcal B^q(\mathcal H)$ both be normal with 
$p,q\in [1,\infty]$ conjugate. Then
\begin{align*}
\lim_{n\to\infty}P_{[C]^e_n}([T]^g_n)= \overline{P_C(T)}\,.
\end{align*}
Here, $[\,\cdot\,]_k^e$ and $[\,\cdot\,]_k^g$ are the maps given by \eqref{cut_out_operator} with respect to the orthonormal bases $(e_n)_{n\in\mathbb N}$ and $(g_n)_{n\in\mathbb N}$ of $\mathcal H$ which diagonalize $C$ and
$T$, respectively.
\end{theorem}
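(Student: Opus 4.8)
The plan is to estimate the Hausdorff distance $\Delta\big(P_{[C]^e_n}([T]^g_n),\overline{P_C(T)}\big)$ directly and to show that it tends to $0$. Since $(e_n)_{n\in\mathbb N}$ and $(g_n)_{n\in\mathbb N}$ are chosen to be eigenbases ordered along the modified eigenvalue sequences $(\gamma_n)_{n\in\mathbb N}$ of $C$ and $(\tau_n)_{n\in\mathbb N}$ of $T$, one has $[C]^e_n=\operatorname{diag}(\gamma_1,\dots,\gamma_n)$ and $[T]^g_n=\operatorname{diag}(\tau_1,\dots,\tau_n)$, so by Definition \ref{defi_3} applied to matrices (where the modified eigenvalue sequence is the ordinary one) we get the finite, hence compact, set
$$
P_{[C]^e_n}([T]^g_n)=\Big\{\sum\nolimits_{k=1}^n\gamma_k\tau_{\sigma(k)}\ \Big|\ \sigma\in S_n\Big\}\,.
$$
By the bound following Definition \ref{defi_3}, together with the remark preceding it (for normal operators the moduli of the non-zero eigenvalues are the singular values, so $(\gamma_n)\in\ell^p$ with $\nu_p(C)=\|(\gamma_n)\|_{\ell^p}$ and $(\tau_n)\in\ell^q$ with $\nu_q(T)=\|(\tau_n)\|_{\ell^q}$; for $p=\infty$ read this as $\gamma_n\to0$, cf.\ Lemma \ref{lemma_2b}(b)), all the sets above as well as $\overline{P_C(T)}$ lie in the closed disc of radius $\nu_p(C)\nu_q(T)$. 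So the whole discussion takes place among uniformly bounded compact subsets of $\mathbb C$.

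For the first of the two maxima defining $\Delta$, take any $z=\sum_{k=1}^n\gamma_k\tau_{\sigma(k)}\in P_{[C]^e_n}([T]^g_n)$ and extend $\sigma\in S_n$ to a permutation $\hat\sigma$ of $\mathbb N$ by $\hat\sigma(k):=k$ for $k>n$. Then $\hat z:=\sum_{k=1}^\infty\gamma_k\tau_{\hat\sigma(k)}\in P_C(T)$ and, by Hölder's inequality,
$$
|z-\hat z|=\Big|\sum\nolimits_{k>n}\gamma_k\tau_k\Big|\le\Big(\sum\nolimits_{k>n}|\gamma_k|^p\Big)^{1/p}\nu_q(T)=:\varepsilon_n
$$
(with $\varepsilon_n:=\sup_{k>n}|\gamma_k|\,\nu_q(T)$ in the case $p=\infty$), where $\varepsilon_n\to0$ \emph{independently of} $\sigma$. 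Hence $\max_{z\in P_{[C]^e_n}([T]^g_n)}d(z,\overline{P_C(T)})\le\varepsilon_n\to0$.

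The second maximum is where the actual work sits, since it amounts to approximating points of $\overline{P_C(T)}$ from inside the matrix spectra. First let $w=\sum_{k=1}^\infty\gamma_k\tau_{\sigma(k)}\in P_C(T)$ and $\delta>0$, and choose $N$ so large that $\big|w-\sum_{k=1}^N\gamma_k\tau_{\sigma(k)}\big|<\delta$ and $\big(\sum_{k>N}|\gamma_k|^p\big)^{1/p}\nu_q(T)<\delta$ (resp.\ $\sup_{k>N}|\gamma_k|\,\nu_q(T)<\delta$ if $p=\infty$). For every $n\ge n_0:=\max\big(\{N\}\cup\sigma(\{1,\dots,N\})\big)$ define $\sigma_n\in S_n$ by $\sigma_n|_{\{1,\dots,N\}}:=\sigma|_{\{1,\dots,N\}}$ and let $\sigma_n$ restrict to any bijection from $\{N+1,\dots,n\}$ onto $\{1,\dots,n\}\setminus\sigma(\{1,\dots,N\})$; splitting the sum at $N$ and applying Hölder's inequality to the remainder (whose $\tau$-indices are pairwise distinct) gives $\big|w-\sum_{k=1}^n\gamma_k\tau_{\sigma_n(k)}\big|<2\delta$, i.e.\ $d\big(w,P_{[C]^e_n}([T]^g_n)\big)<2\delta$ for all $n\ge n_0$. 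Since $P_C(T)$ is dense in $\overline{P_C(T)}$, the triangle inequality extends this estimate to every $w\in\overline{P_C(T)}$; and since $\overline{P_C(T)}$ is compact, covering it by finitely many $\delta$-balls centred at points of $P_C(T)$ and taking the maximum of the corresponding thresholds $n_0$ makes the estimate uniform in $w$. Thus $\max_{w\in\overline{P_C(T)}}d\big(w,P_{[C]^e_n}([T]^g_n)\big)\to0$, and combining this with the previous paragraph yields $\Delta\big(P_{[C]^e_n}([T]^g_n),\overline{P_C(T)}\big)\to0$, which is the claim.

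I expect the main obstacle to be precisely the permutation bookkeeping in the last paragraph: one must manufacture a genuine $\sigma_n\in S_n$ agreeing with $\sigma$ on an initial segment for \emph{every} sufficiently large $n$ (not merely along a subsequence), and then upgrade the resulting pointwise approximation to a uniform one over the compact set $\overline{P_C(T)}$ via a finite subcover. Once the permutations are in place the estimates are routine applications of Hölder's inequality, and the endpoint $p=\infty$ (equivalently $q=1$) requires only replacing the $\ell^p$-tail of $(\gamma_n)$ by $\sup_{k>N}|\gamma_k|$, which vanishes because $C$ is compact.
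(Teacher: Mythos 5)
Your argument is correct. The paper itself gives no proof here beyond citing the trace-class case \cite[Thm.~3.6]{dirr_ve} and asserting that it adapts with ``minimal modifications''; your two-sided Hausdorff estimate --- truncating/extending permutations and controlling the tails $\big(\sum_{k>n}|\gamma_k|^p\big)^{1/p}\nu_q(T)$ (resp.\ $\sup_{k>n}|\gamma_k|\,\nu_q(T)$ for $p=\infty$) via H\"older --- is exactly the kind of argument that citation stands in for, and you have handled the two points that actually require care: constructing $\sigma_n\in S_n$ for \emph{every} $n\ge n_0$ (using that $\sigma(\{1,\dots,N\})\subseteq\{1,\dots,n\}$ once $n\ge n_0$, so the complement can be filled bijectively), and upgrading the pointwise bound on $P_C(T)$ to a uniform one on the compact set $\overline{P_C(T)}$ by a finite subcover. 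In short: correct, complete, and it supplies the details the paper delegates to the reference.
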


\begin{proof}
A proof for $p=1,q=\infty$ (or vice versa) is given in \cite[Thm.~3.6]{dirr_ve} and can be adjusted 
to $p,q\in(1,\infty)$ by minimal modifications.
\end{proof}

Now our main result of this section reads as follows.

\begin{theorem}\label{theorem_3}
Let $C\in\mathcal B^p(\mathcal H)$ and $T\in\mathcal B^q(\mathcal H)$ with $p,q\in [1,\infty]$ 
conjugate. Then the following statements hold.
\begin{itemize}
\item[(a)] If either $C$ or $T$ is normal with collinear eigenvalues, then $\overline{W_C(T)}$ is convex.\vspace{4pt}
\item[(b)] If $C$ and $T$ both are normal, then
\begin{align*}
P_C(T)\subseteq W_C(T)\subseteq\operatorname{conv}(\overline{P_C(T)})\,.
\end{align*}
\item[(c)] If $C$ and $T$ both are normal and the eigenvalues of $C$ or $T$ are collinear, then
\begin{align*}
\overline{W_C(T)}=\operatorname{conv}(\overline{P_C(T)})\,.
\end{align*}
\end{itemize}
\end{theorem}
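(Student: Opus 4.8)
The overall strategy is to reduce everything to the finite-dimensional case via the approximation results already established (Theorem \ref{lemma_2} for the $C$-numerical range and Theorem \ref{lemma_6} for the $C$-spectrum), and then invoke the classical matrix facts together with the set-convergence toolbox of Lemma \ref{lemma_5}. Throughout I would fix orthonormal bases $(e_n)_{n\in\mathbb N}$ and $(g_n)_{n\in\mathbb N}$ of $\mathcal H$ that diagonalize the normal operator(s) in question, so that $[C]^e_n$ and $[T]^g_n$ are themselves normal matrices (diagonal, in fact), with collinear eigenvalues whenever the corresponding infinite-dimensional operator has collinear eigenvalues.

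For part (a): if, say, $T$ is normal with collinear eigenvalues, then for each $n$ the truncation $[T]^g_{2n}$ is a normal matrix with collinear eigenvalues, hence the matrix $[C]^e_{2n}$-numerical range $W_{[C]^e_{2n}}([T]^g_{2n})$ is convex by the classical result (the $C$-numerical range of a matrix is convex when $C$ or $T$ is normal with collinear spectrum, see \cite{article_li_radii}; one could also cite \cite{article_cheungtsing}). Since each term of the sequence is convex and, by Theorem \ref{lemma_2}, the sequence converges in the Hausdorff metric to $\overline{W_C(T)}$, Lemma \ref{lemma_5}(c) gives convexity of the limit. The case where $C$ is normal with collinear eigenvalues is symmetric, using the basis $(e_n)$ that diagonalizes $C$.

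For part (b): The inclusion $P_C(T)\subseteq W_C(T)$ I would prove directly rather than by truncation. Given a permutation $\sigma$, one wants a unitary $U$ with $\operatorname{tr}(CU^\dagger TU)=\sum_n\gamma_n\tau_{\sigma(n)}$; writing $C=\sum\gamma_n\langle e_n,\cdot\rangle e_n$ and $T=\sum\tau_m\langle g_m,\cdot\rangle g_m$ in their spectral decompositions (using the modified eigenvalue sequences, so the $(e_n)$ and $(g_m)$ are genuine orthonormal bases), the unitary $U$ sending $g_{\sigma(n)}\mapsto e_n$ does the job; the interchange of trace and sum is justified by the absolute convergence bound $\sum|\gamma_n\tau_{\sigma(n)}|\le\nu_p(C)\nu_q(T)$ noted after Definition \ref{defi_3}. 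The inclusion $W_C(T)\subseteq\operatorname{conv}(\overline{P_C(T)})$ is the one that needs truncation: for a matrix with $C,T$ normal one has the classical inclusion $W_{[C]^e_n}([T]^g_n)\subseteq\operatorname{conv}(P_{[C]^e_n}([T]^g_n))$ (this is the matrix analogue; cf. \cite{article_marcus,article_li_radii}); taking closures and using Lemma \ref{lemma_5}(b) to pass the convex hull through the limit, together with Theorems \ref{lemma_2} and \ref{lemma_6} and Lemma \ref{lemma_5}(a), yields $\overline{W_C(T)}\subseteq\operatorname{conv}(\overline{P_C(T)})$, and a fortiori $W_C(T)\subseteq\operatorname{conv}(\overline{P_C(T)})$. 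One subtlety: Theorem \ref{lemma_2} uses even truncation indices $2n$ while Theorem \ref{lemma_6} uses all $n$; this is harmless since one may pass to the common subsequence of even indices, or simply note that both limits exist so any subsequence works.

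For part (c): Combining (a) and (b). If $C$ and $T$ are both normal and (say) $T$ has collinear eigenvalues, then by (a) $\overline{W_C(T)}$ is convex, and it contains $P_C(T)$ by (b), hence it contains $\overline{\operatorname{conv}(P_C(T))}=\operatorname{conv}(\overline{P_C(T)})$ (the last equality because $\overline{P_C(T)}$ is compact — it is a bounded closed subset of $\mathbb C$, bounded by $\nu_p(C)\nu_q(T)$ — and the convex hull of a compact planar set is compact, hence closed). Conversely (b) gives $\overline{W_C(T)}\subseteq\operatorname{conv}(\overline{P_C(T)})$ after taking closures (the right side being closed). The two inclusions give equality.

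\textbf{Main obstacle.} The genuinely delicate point is the upper inclusion $W_C(T)\subseteq\operatorname{conv}(\overline{P_C(T)})$, specifically making sure the finite-dimensional inputs are correctly set up: one must use the \emph{modified} eigenvalue sequences so that the truncations $[C]^e_n,[T]^g_n$ really are diagonal matrices whose eigenvalue multisets are the length-$n$ truncations of $(\gamma_k)$ and $(\tau_k)$, and one must make sure the matrix-level inclusion $W_{[C]_n}([T]_n)\subseteq\operatorname{conv}(P_{[C]_n}([T]_n))$ is being applied in a form valid for all normal matrices (not merely Hermitian ones). The bookkeeping around reconciling the $2n$ indexing in Theorem \ref{lemma_2} with the $n$ indexing in Theorem \ref{lemma_6}, and the fact that Lemma \ref{lemma_5} requires \emph{bounded} sequences of compact sets (which holds here, uniformly in $n$, by the bound $\nu_p([C]_n)\nu_q([T]_n)\le\nu_p(C)\nu_q(T)$), are the remaining routine-but-necessary checks.
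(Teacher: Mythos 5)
Your proposal is correct and follows essentially the same route as the paper: truncate via Theorems \ref{lemma_2} and \ref{lemma_6}, apply the classical matrix results (Westwick/Poon convexity, the Birkhoff-type inclusion $W_{[C]_n}([T]_n)\subseteq\operatorname{conv}(P_{[C]_n}([T]_n))$, and the direct permutation-unitary construction for $P_C(T)\subseteq W_C(T)$), and pass to the limit with Lemma \ref{lemma_5}. The only cosmetic difference is in (a), where the paper first rotates $C$ by $e^{i\phi}$ into a Hermitian operator (using that the collinear eigenvalues accumulate at $0$) before citing the Hermitian-case convexity theorem, whereas you invoke the normal-with-collinear-eigenvalues version of that matrix theorem directly on the truncations.
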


\begin{proof}
(a) W.l.o.g. let $C$ be normal with collinear eigenvalues. There exists an orthonormal basis
$(e_n)_{n\in\mathbb N}$ of $\mathcal H$ such that $C=\sum_{n=1}^\infty \gamma_n\langle e_n,\cdot\rangle e_n$.
Since $\gamma_n\to 0$ as $n\to\infty$, due to the collinearity assumption there exists
$\phi\in[0,2\pi)$ such that $e^{i\phi}C$ is hermitian. Thus, by Theorem \ref{lemma_2}, one has
\begin{align*}
\overline{W_C(T)}=\overline{W_{e^{i\phi}C}(e^{-i\phi}T)}=\lim_{n\to\infty} W_{[e^{i\phi}C]_{2n}^e}([e^{-i\phi}T]_{2n}^e)\,.
\end{align*}
As $[e^{i\phi}C]_{2n}^e\in\mathbb C^{2n\times 2n}$ is obviously hermitian for all $n\in\mathbb N$, it follows
that $W_{[e^{i\phi}C]_{2n}^e}([e^{-i\phi}T]_{2n}^e)$ is convex for $n\in\mathbb N$, cf.~\cite{article_poon}. 
Hence Lemma \ref{lemma_5} (c) yields the desired result.

\medskip
\noindent
(b) The statement can be proven completely analogously to \cite[Thm.~3.4 -- second inclusion]{dirr_ve}. 

\medskip
\noindent
(c) Finally, applying the closure and the convex hull to (b) yields
$\operatorname{conv}(\overline{P_C(T)})=\operatorname{conv}(\overline{W_C(T)})=\overline{W_C(T)}$,
where the last equality holds because of (a).
\end{proof}\bigskip

\textbf{Acknowledgements.} The authors are grateful to Thomas Schulte-Herbr\"uggen for valuable comments, and furthermore to the organizers of the WONRA 2018 which gave the inspiration for this follow-up paper. This work was supported in part by the \textit{Elitenetzwerk Bayern} through ExQM.
\bibliographystyle{tfnlm}
\bibliography{C_numerical_range_inf_dim_pq_2018_10_16}
\end{document}